\documentclass[11pt]{article}

\usepackage[T1]{fontenc}
\usepackage{lmodern}
\usepackage{amsmath,amssymb,amsthm}
\usepackage{booktabs,array}
\usepackage[margin=1in]{geometry}
\usepackage{xcolor}
\usepackage[colorlinks=true,linkcolor=blue!55!black,
  citecolor=green!40!black,urlcolor=blue!60!black]{hyperref}
\hypersetup{
  pdftitle={Hamilton Starters and Path Decompositions in Directed Circulants},
  pdfauthor={Jianwei Jiang and Chunhua Yang},
  pdfsubject={Exact four-layer and asymptotic three-layer constructions}}

\newtheorem{theorem}{Theorem}[section]
\newtheorem{lemma}[theorem]{Lemma}
\newtheorem{proposition}[theorem]{Proposition}
\newtheorem{corollary}[theorem]{Corollary}
\theoremstyle{definition}
\newtheorem{definition}[theorem]{Definition}

\theoremstyle{remark}
\newtheorem{remark}[theorem]{Remark}
\newtheorem*{externalresult}{External result}

\newcommand{\Z}{\mathbb Z}
\newcommand{\Cayto}{\operatorname{Cay}^{\rightarrow}}
\newcommand{\Cqr}[2]{\mathcal C_{#1}(#2)}
\newcommand{\Cfour}[1]{\mathcal C_4(#1)}
\newcommand{\Cthree}[1]{\mathcal C_3(#1)}
\newcommand{\Oarc}[2]{\mathcal O_{#1,#2}}
\newcommand{\pn}{\operatorname{pn}}
\newcommand{\succL}{\operatorname{succ}_{L}}

\title{Hamilton Starters and Path Decompositions in Directed Circulants}
\author{
Jianwei Jiang \qquad Chunhua Yang\thanks{Corresponding author.}\\[0.4em]
School of Mathematics and Statistics, Weifang University, Weifang, Shandong, China\\
\texttt{jianweijiangwf@sina.com} \qquad
\texttt{chunhuayangwf@sina.com}
}
\date{}

\begin{document}
\maketitle

\begin{abstract}
For integers \(q\geq3\) and \(r\geq1\), let
\(\Cqr{q}{r}:=\Cayto(\Z_{qr},\{1,\ldots,r\})\).  A Hamilton cycle \(H\) is
called a \(q\)-layer balanced Hamilton starter if, for every step
\(s\in\{1,\ldots,r\}\) and every residue class \(j\in\Z_q\), it contains
exactly one arc \(x\to x+s\) with \(x\equiv j\pmod q\).  Its translates
\[
H,\ H+q,\ \ldots,\ H+(r-1)q
\]
then form a Hamilton decomposition of \(\Cqr{q}{r}\).  A \(q\)-layer balanced
Hamilton starter is called chain-compatible if one arc can be selected from
each translated Hamilton cycle so that the selected arcs form a simple
directed path.  In the constructions below, this compatible deletion chain is
prescribed as \(0\to2\to4\to\cdots\to2r\).
For \(q=4\), a chain-compatible starter is obtained explicitly whenever
\(r\equiv1\pmod4\) and \(r\geq9\), while for \(q=3\) one exists for all
sufficiently large \(r\equiv5\pmod6\). The proofs are constructive: the four-layer case uses an \(ABAB\) step word, while in the three-layer case a directed rotational terrace is lifted to a \(3\)-layer balanced directed \(1\)-factor and a fixed four-arc trade joins its two cycles. Deleting the unique prescribed-path arc from each translated Hamilton cycle gives
\(r\) Hamilton paths; together with the prescribed path, these form an optimal
decomposition of the arc set into \(r+1\) directed paths in both cases.
\end{abstract}
\noindent\textbf{Mathematics Subject Classifications:}
05C20, 05C45, 05C25.
\section{Introduction}
\label{sec:introduction}

Alspach and Pullman \cite{AlspachPullman1974} introduced the path number
\(\pn(D)\) of a digraph \(D\), the minimum number of simple directed paths
whose arc sets partition \(A(D)\).  The tournament case was studied in
\cite{AlspachMasonPullman1976,GiraoGranetKuhnLoOsthus2023,LoPatelSkokanTalbot2020};
see also \cite{PatelYildiz2026} for oriented graphs.  For dense regular
digraphs, K\"uhn and Osthus \cite{KuhnOsthus2013} proved Hamilton
decomposability of sufficiently large regular robust outexpanders of linear
degree, while Gir\~ao et al. \cite[Theorem~5.2]{GiraoGranetKuhnLoOsthus2023}
proved the corresponding path-number equality.  For the present circulants, a direct sliding-window argument verifies the
robust expansion needed to apply Gir\~ao et al. for fixed
\(q\in\{3,4\}\) and all sufficiently large \(r\); see
Appendix~\ref{app:robust-expansion}.  Thus ordinary asymptotic Hamilton
decomposability and the corresponding path-number equality are not the
structural focus here.

For integers \(q\geq3\) and \(r\geq1\), write
\([r]:=\{1,\ldots,r\}\) and let
\[
  \Cqr{q}{r}:=\Cayto\bigl(\Z_{qr},[r]\bigr).
\]
This \(r\)-regular digraph has \(qr^2\) arcs, so
\(\pn(\Cqr{q}{r})\geq\lceil qr^2/(qr-1)\rceil=r+1\).  These are
consecutive-step circulant digraphs, a special case of the consecutive-\(d\)
digraphs studied for Hamiltonicity by Du and Hsu \cite{DuHsu1989}.  Cyclic
starters and difference constructions are classical tools for cycle
factorizations: directed starter cycles and cyclic development appear in
Jordon and Morris \cite{JordonMorris2009}, relative starters with subgroup
transversals in \cite{PasottiPellegrini2018}, and base-\(q\) differences in
Kadri and \v{S}ajna \cite{KadriSajna2025}.  In our setting, translation by
\(q\) has \(qr\) arc orbits indexed by \((j,s)\in\Z_q\times[r]\), where
\(j\) is the tail residue modulo \(q\) and \(s\) the step.  A Hamilton cycle
\(H\) meeting each orbit once is a \(q\)-layer balanced Hamilton starter,
equivalently a Hamiltonian orbit transversal; its translates
\(H,H+q,\ldots,H+q(r-1)\) form a Hamilton decomposition.  Thus the
development mechanism is standard; the difficulty is to construct such a
Hamiltonian orbit transversal inside the restricted connection set \([r]\).
Section~\ref{sec:layered} formalizes this viewpoint.

We further require the starter to be chain-compatible: one arc from each
developed Hamilton cycle can be selected so that the selected arcs form a
simple directed path.  In both constructions, this path is prescribed to be
the arithmetic step-\(2\) path
\begin{equation}\label{eq:intro-step2-chain}
  P_r:\qquad 0\to2\to4\to\cdots\to2r.
\end{equation}
Thus \(P_r\) is a compatible deletion chain.  This is a 1-orthogonality
condition in the sense of Liu
\cite{Liu2009}, realized by the Hamilton decomposition developed from the
same Hamiltonian orbit transversal.  Deleting these \(r\) chain arcs leaves
\(r\) Hamilton paths, while the deleted arcs form \(P_r\). Together,
these \(r+1\) paths attain the lower bound above.

For \(q=4\), every \(r\equiv1\pmod4\) with \(r\geq9\) admits an explicit
4-layer balanced Hamilton starter in \(\Cfour{r}\).  In the standard terrace
terminology of Ollis~\cite[Section~2]{Ollis2012}, recalled in
Section~\ref{sec:terraces}, a normalized directed \(R^*\)-terrace of
\(\Z_r\) yields an \(ABAB\) step word whose prefix structure gives
Hamiltonicity, four-layer balance, and the compatible deletion chain
\eqref{eq:intro-step2-chain}.  Consequently,
\(\pn(\Cfour{r})=r+1\); see Theorems~\ref{thm:four-exact}
and~\ref{thm:four-path}.

For \(q=3\), there is a non-explicit constant \(r_0\) such that every
\(r\equiv5\pmod6\), \(r\geq r_0\), admits a 3-layer balanced Hamilton starter
in \(\Cthree{r}\).  A directed rotational terrace is lifted to a 3-layer
balanced directed 1-factor with a long cycle and a 3-cycle.  A fixed four-arc
trade joins the two cycles while preserving balance and leaving the three
step-\(2\) starter arcs unchanged; these arcs yield the compatible deletion
chain \eqref{eq:intro-step2-chain}.  Thus
\(\pn(\Cthree{r})=r+1\).  The numerical equality is only an asymptotic
consequence; the structural content is the specified cyclic Hamilton
decomposition and its compatible deletion chain.  See
Theorem~\ref{thm:three-layer-chain-compatible} and Corollary~\ref{cor:three-layer-path-number}.  The
prescribed-path extension for directed rotational terraces is derived in
Section~\ref{sec:prescribed} from
\cite[Theorem~6.9]{MuyesserPokrovskiy2025}.

\section{Layered directed circulants and balanced Hamilton starters}
\label{sec:layered}

All cyclic groups are written additively. For any $m\geq1$, when an element of $\Z_m$ is used
as an integer representative, we take its least nonnegative representative in
$\{0,\ldots,m-1\}$. When a nonzero element of $\Z_r$ is used as a positive
step, $[x]_r$ denotes its representative in $\{1,\ldots,r-1\}$. Recall
that $[r]=\{1,\ldots,r\}$, and write
\begin{equation}\label{eq:def-cqr}
  \Cqr{q}{r}:=\Cayto(\Z_{qr},[r]).
\end{equation}
The symbol $r$ itself denotes the additional allowed step in $[r]$; although it is $0$ modulo $r$, it is a nonzero step in $\Z_{qr}$. An arc $x\to x+s$ has \emph{tail} $x$ and \emph{step} $s$.  Since
$r<qr/2$ for $q\geq3$, no reverse pair of allowed arcs occurs, so the
digraph is oriented.

\begin{definition}\label{def:starter}
For $j\in\Z_q$, the \emph{$j$-th layer} of $\Cqr{q}{r}$ is the set of
vertices congruent to $j$ modulo $q$. A spanning subdigraph $F$ of
$\Cqr{q}{r}$ is \emph{$q$-layer balanced} if, for every
$(j,s)\in\Z_q\times[r]$, it contains exactly one arc of step $s$ whose tail
lies in the $j$-th layer. A \emph{$q$-layer balanced Hamilton starter} is a
Hamilton cycle of $\Cqr{q}{r}$ that is $q$-layer balanced.
\end{definition}

Let $\tau_q:x\mapsto x+q$. Let \(G_q=\langle\tau_q\rangle\cong\Z_r\) be the cyclic group of order \(r\) generated by \(\tau_q\). It acts naturally on the arc set. For
$j\in\Z_q$ and $s\in[r]$, define
\begin{equation}\label{eq:arc-orbit}
  \Oarc{j}{s}:=
  \{x\to x+s:x\equiv j\pmod q\}.
\end{equation}

\begin{proposition}\label{prop:arc-orbits}
The sets $\Oarc{j}{s}$ are precisely the $qr$ orbits of $G_q$ on
$A(\Cqr{q}{r})$.  Every orbit has size $r$, and the action on it is regular.
\end{proposition}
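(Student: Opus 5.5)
We argue directly from the definitions, in three short steps. Throughout, an arc of \(\Cqr{q}{r}\) is determined by its tail \(x\in\Z_{qr}\) and its step \(s\in[r]\). The steps are pairwise distinct as elements of \(\Z_{qr}\), since \(1\le s\le r<qr\). Hence the map \((x,s)\mapsto (x\to x+s)\) is a bijection from \(\Z_{qr}\times[r]\) onto \(A(\Cqr{q}{r})\). The generator \(\tau_q\) sends \((x,s)\) to \((x+q,s)\), so in these coordinates \(G_q\) acts only on the first factor, by translation by multiples of \(q\).

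\emph{Step 1: invariants.} Because \(q\mid qr\), reduction modulo \(q\) is well defined on \(\Z_{qr}\). Translation by \(q\) preserves both the residue of the tail modulo \(q\) and the step. Thus each \(\Oarc{j}{s}\) is \(G_q\)-invariant. The sets \(\Oarc{j}{s}\), for \((j,s)\in\Z_q\times[r]\), are pairwise disjoint and cover \(A(\Cqr{q}{r})\). They are therefore unions of orbits, and there are \(qr\) of them.

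\emph{Step 2: transitivity.} Two tails \(x,y\) with \(x\equiv y\equiv j\pmod q\) differ by an element of \(q\Z_{qr}\), which is exactly the subgroup \(\{0,q,2q,\ldots,(r-1)q\}\). Write \(y-x=kq\). Then \(\tau_q^{k}\) sends \(x\to x+s\) to \(y\to y+s\). So \(G_q\) is transitive on each \(\Oarc{j}{s}\), and these sets are precisely the orbits.

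\emph{Step 3: size and regularity.} The layer \(\{x\in\Z_{qr}: x\equiv j \pmod q\}\) has exactly \(r\) elements, so \(|\Oarc{j}{s}|=r\). For regularity, suppose \(\tau_q^k\) fixes an arc \(x\to x+s\). Then \(x+kq=x\) in \(\Z_{qr}\), so \(r\mid k\), and \(\tau_q^k\) is the identity. Hence stabilizers are trivial. Combined with transitivity and \(|G_q|=r\), the action on each orbit is regular, and the orbit–stabilizer count \(r=|G_q|/1\) is consistent with Step 3.

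The argument is routine. The only point needing care is the first one: arcs must be faithfully encoded by (tail, step), that is, distinct steps in \([r]\) must give distinct arcs. This uses \(r<qr\); the remark that the digraph is oriented is not needed for this proposition.
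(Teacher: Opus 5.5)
Your proof is correct and follows essentially the same route as the paper: $\tau_q$ preserves both the tail residue modulo $q$ and the step; it acts transitively on each $\Oarc{j}{s}$ because two tails in the same layer differ by some $kq$; and stabilizers are trivial because $qk\equiv0\pmod{qr}$ forces $r\mid k$. The only differences are cosmetic. You make the encoding of arcs by tail and step explicit, which the paper uses implicitly when counting $qr$ disjoint sets covering $qr^2$ arcs, and you read off $|\Oarc{j}{s}|=r$ from the layer size rather than from regularity.
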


\begin{proof}
Translation by \(q\) preserves both the step of an arc and the residue class modulo \(q\) of its tail. Hence, if \(e=x\to x+s\in\Oarc{j}{s}\), then the entire \(G_q\)-orbit of \(e\) is contained in \(\Oarc{j}{s}\). Conversely, let \(e'=y\to y+s\in\Oarc{j}{s}\). Since \(x\equiv y\equiv j\pmod q\), there exists \(k\in\Z_r\) such that \(y-x\equiv qk\pmod{qr}\). Therefore \(\tau_q^k(e)=(x+qk)\to(x+qk+s)=y\to y+s=e'\). Thus every arc of \(\Oarc{j}{s}\) lies in the orbit of \(e\), so \(\Oarc{j}{s}\) is exactly one \(G_q\)-orbit. If \(\tau_q^k\) fixes an arc, then \(qk\equiv0\pmod{qr}\), and hence \(k\equiv0\pmod r\). Thus the stabilizer of every arc is trivial, so the action on each orbit is regular. In particular, every orbit has size \(|G_q|=r\). Finally, there are \(q\) choices for \(j\) and \(r\) choices for \(s\), so the \(qr\) sets \(\Oarc{j}{s}\) are pairwise disjoint and cover all \(qr^2\) arcs of \(\Cqr{q}{r}\). Hence they are precisely all \(G_q\)-orbits on \(A(\Cqr{q}{r})\).
\end{proof}

Thus a $q$-layer balanced Hamilton starter is precisely a Hamiltonian
transversal of the $G_q$-arc orbits.

\begin{lemma}\label{lem:development}
If $H$ is a $q$-layer balanced Hamilton starter, then
\begin{equation}\label{eq:developed-family}
  H,H+q,H+2q,\ldots,H+q(r-1)
\end{equation}
are pairwise arc-disjoint Hamilton cycles whose union is
$A(\Cqr{q}{r})$.
\end{lemma}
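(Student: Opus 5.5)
The plan is to combine two facts: translation is an automorphism of the circulant, and Proposition~\ref{prop:arc-orbits} says the \(G_q\)-action on each arc orbit is regular.

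First, each translate \(H+qk\) is a Hamilton cycle. The map \(\tau_q^k:x\mapsto x+qk\) is a bijection of \(\Z_{qr}\) and preserves steps, so it is an automorphism of \(\Cqr{q}{r}\). It therefore carries the Hamilton cycle \(H\) to a Hamilton cycle \(\tau_q^k(H)=H+qk\).

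Next, the arc count. \(H\) has \(qr\) arcs, and \(q\)-layer balance says it contains exactly one arc \(e_{j,s}\) from each orbit \(\Oarc{j}{s}\), for \((j,s)\in\Z_q\times[r]\). The translate \(H+qk\) then contains exactly the arcs \(\tau_q^k(e_{j,s})\), and \(\tau_q^k(e_{j,s})\) lies in the same orbit \(\Oarc{j}{s}\).

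For the key step, fix an orbit \(\Oarc{j}{s}\). By Proposition~\ref{prop:arc-orbits}, \(G_q\) acts regularly on it. Hence the map \(k\mapsto\tau_q^k(e_{j,s})\) is a bijection from \(\Z_r\) onto \(\Oarc{j}{s}\). So, as \(k\) runs over \(0,\ldots,r-1\), the arcs of the \(r\) translates lying in \(\Oarc{j}{s}\) are pairwise distinct and exhaust the orbit. The orbits partition \(A(\Cqr{q}{r})\), so every arc of \(\Cqr{q}{r}\) lies in exactly one translate. This gives both pairwise arc-disjointness and that the union is \(A(\Cqr{q}{r})\).

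As a consistency check, \(r\cdot qr=qr^2\) equals \(|A(\Cqr{q}{r})|\). There is no real obstacle here. The only point needing care is to use that \(H\) meets each orbit in exactly one arc, so each translate contributes exactly one arc per orbit, combined with freeness of the action.
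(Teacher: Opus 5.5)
Your proposal is correct and follows the paper's own argument: translation is an automorphism, so each translate is Hamilton, and since $H$ meets each orbit $\Oarc{j}{s}$ exactly once, the regularity in Proposition~\ref{prop:arc-orbits} means the $r$ translates hit every arc of that orbit exactly once. This gives both disjointness and coverage; you simply make the bijection $k\mapsto\tau_q^k(e_{j,s})$ more explicit than the paper does.
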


\begin{proof}
Translation is an automorphism, so every cycle in
\eqref{eq:developed-family} is Hamilton.  The starter meets every orbit
$\Oarc{j}{s}$ once by Definition~\ref{def:starter}.  Regularity in
Proposition~\ref{prop:arc-orbits} says that its $r$ translates meet every
arc of that orbit once.  Applying this to all $qr$ orbits proves both
disjointness and coverage.
\end{proof}

\begin{figure}[ht]
\centering
\fbox{\begin{minipage}{0.91\linewidth}
\centering
\textbf{Development schematic.}\quad
$\Oarc{j}{s}=\{e,\tau_qe,\ldots,\tau_q^{r-1}e\}$
\[
\begin{array}{c|cccc}
\text{cycle}&H&H+q&\cdots&H+q(r-1)\\ \hline
\text{arc in }\Oarc{j}{s}&e&\tau_qe&\cdots&\tau_q^{r-1}e
\end{array}
\]
The same row occurs for each of the $qr$ pairs $(j,s)$.  A $q$-layer balanced
Hamilton starter is exactly a Hamiltonian orbit transversal.
\end{minipage}}
\caption{One starter develops to the full arc decomposition.}
\label{fig:development}
\end{figure}

For the present circulants, we define the \emph{winding number} of a closed directed walk $H$ in $\Cqr{q}{r}$ as follows.  Let $s(e)\in[r]$ denote the positive step of an arc $e$ of $H$, and set
\[
  \omega(H):=\frac{1}{qr}\sum_{e\in H}s(e),
\]
where arcs are counted with multiplicity.

\begin{proposition}\label{prop:winding}
If a $q$-layer balanced Hamilton starter exists, then $r$ is odd and every
such starter has winding number
\begin{equation}\label{eq:forced-winding}
  \omega(H)=\frac{r+1}{2}.
\end{equation}
\end{proposition}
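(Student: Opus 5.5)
Since $H$ is $q$-layer balanced, it contains exactly one arc of each step $s\in[r]$ in each of the $q$ layers. So the step multiset of $H$ is completely determined: every $s\in[r]$ occurs exactly $q$ times, and $H$ has $qr$ arcs in total, as it must for a Hamilton cycle on $\Z_{qr}$. The plan is to compute $\sum_{e\in H}s(e)$ directly and then use the fact that a closed walk must return to its start.

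\textbf{Step 1 (the winding value).} Summing the steps gives
\[
  \sum_{e\in H}s(e)=q\sum_{s=1}^{r}s=\frac{qr(r+1)}{2},
\]
so $\omega(H)=(r+1)/2$, which is \eqref{eq:forced-winding}.

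\textbf{Step 2 (parity of $r$).} Traversing the Hamilton cycle $H$ once returns to the starting vertex. Hence the total displacement $\sum_{e\in H}s(e)$ is $\equiv 0 \pmod{qr}$, that is, $\omega(H)\in\Z$. By Step 1 this means $(r+1)/2$ is an integer, so $r$ is odd.

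I expect no real obstacle here. The only point needing care is that the integer sum of the positive representatives $s(e)\in[r]$ equals the displacement in $\Z_{qr}$ modulo $qr$. This holds because each arc $x\to x+s(e)$ advances the vertex by exactly $s(e)$ in $\Z_{qr}$.
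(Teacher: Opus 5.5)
Your proof is correct and follows essentially the same route as the paper: balance forces each step $s\in[r]$ to appear exactly $q$ times, so the total displacement is $qr(r+1)/2$. Since $H$ is closed in $\Z_{qr}$, this displacement is a multiple of $qr$, which gives both that $r$ is odd and that $\omega(H)=(r+1)/2$; the only difference is that you compute $\omega(H)$ first and then invoke integrality, while the paper states the divisibility condition directly.
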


\begin{proof}
Every step $s\in[r]$ occurs exactly $q$ times.  The total positive
displacement is therefore
\[
 q\sum_{s=1}^r s=\frac{qr(r+1)}2.
\]
Since $H$ is closed in $\Z_{qr}$, its total positive displacement is an integer multiple of $qr$. Thus $qr$ divides $qr(r+1)/2$, which holds exactly when $r$ is odd. In that case the quotient, and hence the winding number, is $(r+1)/2$.
\end{proof}

\section{Directed rotational terraces and rainbow Hamilton cycles}\label{sec:terraces}

Throughout this section, let \(r\geq2\).

For background and terminology on directed rotational terraces and directed
\(R^*\)-terraces, see Ollis~\cite[Section~2]{Ollis2012}. We use the
following cyclic form.

\begin{definition}\label{def:terrace}
A \emph{directed rotational terrace} of $\Z_r$ is a cyclic ordering
\[
  T=(t_0,t_1,\ldots,t_{r-2})
\]
of $\Z_r\setminus\{0\}$ such that the cyclic differences
\begin{equation}\label{eq:terrace-diffs}
  d_i=[t_{i+1}-t_i]_r\qquad(i\in\Z_{r-1})
\end{equation}
also run through $1,\ldots,r-1$ exactly once.  It is a directed
$R^*$-terrace if $t_i=t_{i-1}+t_{i+1}$ for some cyclic index $i$. The cyclic word $D(T):=(d_0,d_1,\ldots,d_{r-2})$ is called the \emph{difference word} of $T$.
\end{definition}
A directed \(R^*\)-terrace has an additional local relation that will be
used in the four-layer construction.

Let $\Gamma_r$ be the complete directed graph on
$\Z_r\setminus\{0\}$. Colour $x\to y$ by the nonzero group element
$y-x\in\Z_r$; when written as a positive integer, this colour is represented
by $[y-x]_r\in\{1,\ldots,r-1\}$. A directed path or cycle is \emph{rainbow}
if its arc colours are pairwise distinct.

\begin{proposition}\label{prop:rainbow}
The following objects are equivalent:
\begin{enumerate}
  \item a directed rotational terrace of $\Z_r$;
  \item a rainbow Hamilton cycle in $\Gamma_r$ using every nonzero colour.
\end{enumerate}
The equivalence preserves the cyclic order of the entries and identifies
the terrace difference word with the colour sequence.
\end{proposition}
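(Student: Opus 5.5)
The equivalence is essentially definitional, so I will write down the two maps explicitly and check that they are mutually inverse and respect the structure.

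\emph{From a terrace to a rainbow cycle.} Let \(T=(t_0,\ldots,t_{r-2})\) be a directed rotational terrace of \(\Z_r\). Define the closed walk
\[
C_T:\ t_0\to t_1\to\cdots\to t_{r-2}\to t_0
\]
in \(\Gamma_r\). Since the \(t_i\) list each element of \(\Z_r\setminus\{0\}\) exactly once, \(C_T\) is a Hamilton cycle of \(\Gamma_r\). Consecutive entries are distinct, so every step is a genuine arc of the complete digraph; when \(r=2\) the cycle degenerates to a single vertex, and I will note this convention separately. The colour of the arc \(t_i\to t_{i+1}\) is \(t_{i+1}-t_i\), whose positive representative is exactly \(d_i=[t_{i+1}-t_i]_r\) from \eqref{eq:terrace-diffs}. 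By the terrace condition these run through \(1,\ldots,r-1\) once each. Hence \(C_T\) is rainbow and uses every nonzero colour.

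\emph{From a rainbow cycle to a terrace.} Conversely, let \(C\) be a rainbow Hamilton cycle of \(\Gamma_r\) using all nonzero colours. Read its vertices in cyclic order as \((t_0,\ldots,t_{r-2})\). This is a cyclic ordering of \(\Z_r\setminus\{0\}\). The colours of its \(r-1\) arcs are pairwise distinct nonzero elements. There are only \(r-1\) such elements, so the colour sequence is a permutation of \(1,\ldots,r-1\), which is exactly the condition in Definition~\ref{def:terrace}.

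\emph{The two maps are inverse.} Both constructions keep the same cyclic sequence of vertices, so they are mutually inverse. Each also sends the word \(D(T)\) to the colour sequence read along the cycle, which gives the final sentence of the statement.

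The only point needing care is well-definedness up to cyclic rotation. Both objects are cyclic, so a rotation of the terrace corresponds to choosing a different starting vertex on the same cycle; this rotates \(D(T)\) and the colour sequence in the same way. I expect no real obstacle beyond this bookkeeping of cyclic indices and the positive-representative convention \([\cdot]_r\).
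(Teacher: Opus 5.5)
Your proposal is correct and follows essentially the same route as the paper. Both arguments read the terrace as a cyclic vertex sequence in $\Gamma_r$ whose arc colours are the differences $d_i$. For the converse, both use Hamiltonicity together with the count of $r-1$ pairwise distinct nonzero colours. Your remarks on mutual inverseness and rotation invariance are harmless bookkeeping that the paper leaves implicit.
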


\begin{proof}
Given a terrace $T=(t_0,\ldots,t_{r-2})$, its entries are exactly the
$r-1$ vertices of $\Gamma_r$, and Definition~\ref{def:terrace} says that
the colours of its cyclic edges are exactly $\Z_r\setminus\{0\}$.  Thus it
is the required rainbow Hamilton cycle.  Conversely, list the vertices of
such a cycle in cyclic order.  Hamiltonicity ensures that every nonzero group
element appears once, while the rainbow condition and the $r-1$ available colours
force the consecutive differences to list every nonzero element once.
This is precisely a directed rotational terrace.
\end{proof}

For a finite word $X=(x_0,\ldots,x_{h-1})$ of length $h$ and an integer
$M\geq1$, define its prefix sums and modular tail-prefix set by
\begin{equation}\label{eq:prefixes}
  \Sigma_i(X):=\sum_{u=0}^{i-1}x_u\quad(0\leq i\leq h),
  \qquad
  U_M(X):=\{\Sigma_i(X)\bmod M:0\leq i<h\}.
\end{equation}
The values $\Sigma_i(X)$ with $0\leq i<h$ are called the \emph{tail prefix sums} of $X$; they are the tails of the successive arcs generated by the word. The terminal prefix $\Sigma_h(X)$ is omitted from $U_M(X)$. We call a length-\(M\) word \(X\) a \emph{Hamilton step word} on \(\Z_M\) if
\[
\Sigma_M(X)=0\quad\text{in }\Z_M
\qquad\text{and}\qquad
U_M(X)=\Z_M.
\] We will use the terrace order in the three-layer lift and
its difference word in the four-layer construction.

\begin{lemma}\label{lem:symmetries}
If $T$ is a directed rotational terrace of $\Z_r$, then cyclic rotation of
$T$, multiplication of every entry by a unit $c\in\Z_r^\times$, and
reversal followed by negation all produce directed rotational terraces.
The $R^*$ property is preserved by rotation and unit multiplication.
\end{lemma}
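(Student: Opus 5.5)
We check each operation directly against Definition~\ref{def:terrace}. For each one we verify two facts: the new sequence is again a cyclic ordering of $\Z_r\setminus\{0\}$, and its cyclic differences again run through every nonzero element of $\Z_r$ exactly once. It is convenient to phrase the second condition as "the map $i\mapsto t_{i+1}-t_i$ is a bijection $\Z_{r-1}\to\Z_r\setminus\{0\}$". Equivalently, by Proposition~\ref{prop:rainbow}, we check that each operation sends a rainbow Hamilton cycle of $\Gamma_r$ using all colours to another such cycle.

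\emph{Rotation.} Replacing $(t_i)$ by $(t_{i+k})$ only re-indexes the cyclic sequence. The entry set is unchanged, and the cyclic difference word is rotated by $k$, so its entries are the same multiset. For the $R^*$ property, a relation $t_i=t_{i-1}+t_{i+1}$ becomes the same relation at index $i-k$ in the rotated terrace.

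\emph{Unit multiplication.} Multiplication by $c\in\Z_r^\times$ is an automorphism of the additive group $\Z_r$. It therefore permutes $\Z_r\setminus\{0\}$, so $(ct_i)$ is again an ordering of the nonzero elements. Its differences are $c(t_{i+1}-t_i)=c\,d_i$, which is again a permutation of the nonzero elements. The relation $t_i=t_{i-1}+t_{i+1}$ is linear, so it is preserved after multiplying by $c$.

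\emph{Reversal followed by negation.} Set $t'_i:=-t_{-i}$, with indices in $\Z_{r-1}$. Negation permutes the nonzero elements, so the entries are again all of $\Z_r\setminus\{0\}$. The new differences are
\[
t'_{i+1}-t'_i=-t_{-i-1}+t_{-i}=t_{-i}-t_{-i-1}=d_{-i-1}.
\]
This is the original difference word read backwards, so again every nonzero element appears exactly once. In the graph language, reversing a directed cycle negates every colour, and negating the vertices negates every colour once more, so the colours are restored.

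The argument involves no real obstacle. The one point that needs care is the index bookkeeping in the reversal case, where the reversal must be taken together with the negation: reversal alone negates each difference. Negation alone still gives a permutation of the nonzero elements, so it would also work, but the statement asks only for the combined operation. The statement makes no claim that reversal preserves the $R^*$ property, and we do not attempt to prove one.
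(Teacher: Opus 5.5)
Your proof is correct and follows the same direct check as the paper: rotation re-indexes, unit multiplication permutes both entries and differences, reversal-then-negation gives the reversed difference word $d_{-i-1}$, and homogeneity of $t_i=t_{i-1}+t_{i+1}$ handles the $R^*$ claim; you simply write out the index bookkeeping explicitly. One minor point about your closing remark: reversal alone also yields a directed rotational terrace, since its differences $-d_{-i-1}$ still run through all nonzero residues, so the negation only makes the new difference word exactly the reverse of the old one and is not needed for the terrace property.
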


\begin{proof}
Rotation changes only the chosen origin of the cyclic word.  Unit
multiplication permutes both the nonzero entries and their nonzero
differences.  Reversal negates every oriented difference; the additional
negation restores the same difference convention.  Finally, the identity
$t_i=t_{i-1}+t_{i+1}$ is homogeneous and therefore is preserved under
multiplication by a unit, while a rotation only moves its index.
\end{proof}

\section{The exact four-layer construction}\label{sec:four}

Throughout this section $r\equiv1\pmod4$ and $r\geq9$.

\subsection{Normalizing the \texorpdfstring{\(R^*\)}{R*}-terrace}

The four-layer construction begins with the following existence theorem for directed $R^*$-terraces.

\begin{theorem}\label{thm:rstar}
For every odd $r>5$, the cyclic group $\Z_r$ has a directed $R^*$-terrace.
\end{theorem}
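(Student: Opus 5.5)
This theorem is essentially an external existence result. Ollis~\cite{Ollis2012} surveys directed rotational terraces and directed \(R^*\)-terraces of cyclic groups, and I would quote that literature for odd \(r>5\). Failing that, I would give a direct construction split by residue class. Below is the plan, which reduces everything to Definition~\ref{def:terrace}.

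\textbf{Reduction.} By Proposition~\ref{prop:rainbow}, a directed rotational terrace of \(\Z_r\) is the same thing as a rainbow Hamilton cycle in \(\Gamma_r\) that uses every nonzero colour. The \(R^*\) condition \(t_i=t_{i-1}+t_{i+1}\) can be rewritten as \(t_{i+1}-t_i=-t_{i-1}\). In words: at one position, the outgoing difference equals the negative of the entry two places back. By Lemma~\ref{lem:symmetries}, rotation and unit multiplication preserve \(R^*\). So we may normalize, for example, to \(t_0=1\) with the \(R^*\) relation at index \(1\), which forces \(t_2=t_1-1\). The task is then to exhibit one explicit cyclic sequence of this form for each odd \(r\geq7\).

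\textbf{Construction.} I would start from the classical zigzag (Williams-type) sequence \(0,1,-1,2,-2,\ldots\). Its successive differences \(1,-2,3,-4,\ldots\) run through all nonzero residues when \(r\) is odd; this is the standard sequencing idea adapted to the cyclic, zero-deleted setting. Concretely:
\begin{enumerate}
\item Take a sequence of nonzero entries, such as \(1,-1,2,-2,\ldots,\tfrac{r-1}{2},-\tfrac{r-1}{2}\) in a suitably twisted order. Its consecutive differences alternate in sign and grow in magnitude, covering all but a few colours.
\item Repair the closing difference and the missing colours by a bounded local modification, namely reversing a short segment near the wrap-around point.
\item Place the \(R^*\) triple \(t_{i-1},t_i,t_{i+1}\) with \(t_i=t_{i-1}+t_{i+1}\) inside that repaired window.
\end{enumerate}
The repair will probably depend on \(r\bmod 4\), or perhaps \(r\bmod 8\), giving a couple of explicit families. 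I would verify both the entry set and the difference multiset by writing them as unions of arithmetic progressions. The small cases \(r=7,9,11,\ldots\), up to wherever the general pattern begins, would be checked by hand or by listing the terraces explicitly.

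\textbf{Main obstacle.} The hard part is the cyclic closure. A linear zigzag sequencing misses exactly one difference, and in the rotational setting the wrap-around difference \(t_0-t_{r-2}\) has to be precisely that missing colour. At the same time, the local \(R^*\) relation has to hold somewhere in the sequence. I expect both constraints to be met only by a careful case-dependent choice of the repair segment, so this is where I would spend the effort. I would first search small \(r\) for patterns in which the \(R^*\) triple sits adjacent to the closing arc. If no uniform pattern emerges, I would fall back on citing the existence result from the terrace literature referenced in Ollis~\cite{Ollis2012}.
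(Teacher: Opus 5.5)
Your primary route matches the paper: it gives no proof of Theorem~\ref{thm:rstar} and quotes it directly from Ollis~\cite[Lemma~2]{Ollis2012}. You should cite that lemma specifically rather than ``the literature referenced in Ollis''. On that basis your proposal is complete.

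The fallback construction, however, would fail as described, because its starting claim is false. For odd $r$, the zigzag $0,1,-1,2,-2,\ldots$ does not have distinct differences. Its $k$th difference is $(-1)^{k+1}k$, and since $r$ is odd,
\[
(-1)^{r-k+1}(r-k)\equiv(-1)^{k+1}k\pmod r .
\]
So the differences with indices $k$ and $r-k$ coincide, and only $(r-1)/2$ colours occur, each twice. For $r=7$ the differences are $1,5,3,3,5,1$.

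Your zero-deleted variant $1,-1,2,-2,\ldots$ has the same defect. For $r=7$ its cyclic differences are $5,3,3,5,1,4$, and in general about $(r-3)/2$ colours are missing. This fits the classical fact that $\Z_r$ is sequenceable only for even $r$: the zigzag is an even-order device.

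Consequently the defect is not confined near the wrap-around point. Roughly half the colours are missing, so reversing a short segment cannot repair it. A self-contained proof would need a genuinely different explicit family of rotational terraces with the $R^*$ triple built in, which is exactly what the cited lemma supplies. As written, only the citation part of your proposal is a proof.
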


This existence result is due to Ollis~\cite[Lemma~2]{Ollis2012}.

Choose a directed \(R^*\)-terrace of \(\Z_r\). Cyclically rotate it, and denote the resulting terrace by
\[
P=(P_0,P_1,\ldots,P_{r-2}).
\]
Let its cyclic difference word be
\[
D=(d_0,d_1,\ldots,d_{r-2}),\qquad d_i=[P_{i+1}-P_i]_r,
\]
where the indices are taken cyclically. By the \(R^*\) relation and the choice of rotation,
\(
P_1=P_0+P_2
\)
in \(\Z_r\). Hence the first two differences are
\[
a:=d_0=P_1-P_0=P_2,\qquad b:=d_1=P_2-P_1=-P_0.
\]
They are nonzero, and \(a\neq-b\) because \(P_2\neq P_0\).

For the parity argument used later to obtain four-layer balance, we first normalize the two distinguished differences to have odd representatives.

\begin{lemma}\label{lem:odd-normalization}
Let $r$ be odd and let $a,b\in\Z_r\setminus\{0\}$ satisfy $a\neq-b$. There is a unit $c\in\Z_r^\times$ for which the least positive representatives of $ca$ and $cb$ are both odd.
\end{lemma}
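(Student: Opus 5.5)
The plan is to restrict the search to multipliers \(c=\pm 2^k\) with \(k\geq1\). These are units in \(\Z_r\) because \(r\) is odd. Write \(\rho(x)\in\{0,\ldots,r-1\}\) for the least nonnegative representative of \(x\in\Z_r\), which is the least positive representative when \(x\neq0\).

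First I record two elementary parity facts for odd \(r\) and nonzero \(x\).

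\emph{(i) Negation flips parity.} Since \(\rho(-x)=r-\rho(x)\) and \(r\) is odd, \(\rho(-x)\) and \(\rho(x)\) have opposite parities. So if some unit \(c\) gives \(\rho(ca)\) and \(\rho(cb)\) the same parity, then either \(c\) or \(-c\) makes both odd. It therefore suffices to find a unit \(c\) for which \(\rho(ca)\) and \(\rho(cb)\) have equal parity.

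\emph{(ii) Doubling reads off a binary digit.} We have \(\rho(2x)=2\rho(x)\) if \(\rho(x)<r/2\), and \(\rho(2x)=2\rho(x)-r\) otherwise. Hence \(\rho(2x)\) is odd exactly when \(\rho(x)/r\geq 1/2\). Iterating, for every \(k\geq1\), \(\rho(2^k x)\) is odd if and only if the \(k\)-th binary digit of the fraction \(\rho(x)/r\in(0,1)\) equals \(1\). Because \(r\) is odd, \(\rho(x)/r\) is not a dyadic rational. So its binary expansion is unique, infinite and eventually periodic, and no ambiguity of the form \(0.0111\ldots=0.1000\ldots\) can arise.

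Now argue by contradiction. Suppose that for every \(k\geq1\) the parities of \(\rho(2^k a)\) and \(\rho(2^k b)\) differ. By (ii), the binary digits of \(\alpha:=\rho(a)/r\) and \(\beta:=\rho(b)/r\) are complementary in every position \(k\geq1\). Adding the two expansions digitwise gives
\[
\alpha+\beta=\sum_{k\geq1}2^{-k}=1,
\]
that is, \(\rho(a)+\rho(b)=r\). This means \(a=-b\) in \(\Z_r\), contradicting the hypothesis.

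Hence some \(k\geq1\) gives \(\rho(2^k a)\) and \(\rho(2^k b)\) equal parity. By (i), one of \(c=2^k\) or \(c=-2^k\) makes both representatives odd, and this \(c\) is a unit.

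The step that needs care is (ii), together with the summation. The induction for (ii) is \(\rho(2^{k}x)/r=\{2^{k}\rho(x)/r\}\), the fractional part, and the parity of \(\rho(2^{k+1}x)\) is decided by whether this fractional part is at least \(1/2\). The summation step relies on the expansions being non-terminating, which is guaranteed because \(r\) is odd. Everything else is routine.
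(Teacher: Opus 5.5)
Your proof is correct and follows essentially the same route as the paper: both restrict to the multipliers $\pm 2^t$, identify the parity of the representative of $2^t x$ with the $t$-th binary digit of $x/r$, and use the fact that negation flips parity because $r$ is odd. The only cosmetic difference is where the contradiction comes from. The paper compares the digit sequences of $a$ and $-b$, and equal parities force $a=-b$. You compare those of $a$ and $b$ directly, and complementary digits force $\rho(a)+\rho(b)=r$; this is the same argument.
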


\begin{proof}
For $x\in\{1,\ldots,r-1\}$ put $x_t=[2^t x]_r$. Since $x_t\equiv2x_{t-1}\pmod r$ and $2\leq2x_{t-1}\leq2r-2$, there is a unique $e_t\in\{0,1\}$ such that
\begin{equation}\label{eq:doubling-recurrence}
x_t=2x_{t-1}-re_t.
\end{equation}
Since $r$ is odd, reducing \eqref{eq:doubling-recurrence} modulo $2$ shows that $e_t$ is the parity of $x_t$. Rewriting \eqref{eq:doubling-recurrence} as $x_{t-1}/r=e_t/2+x_t/(2r)$ and iterating gives, for every $m\geq1$,
\[
\frac{x}{r}=\sum_{t=1}^{m}\frac{e_t}{2^t}+\frac{x_m}{2^m r}.
\]
Since $0<x_m<r$, the final term tends to $0$ as $m\to\infty$, and hence
\begin{equation}\label{eq:binary-determination}
\frac{x}{r}=\sum_{t\geq1}\frac{e_t}{2^t}.
\end{equation}
Therefore the parity sequence of the representatives $[2^t x]_r$ determines $x$.

Suppose that no unit makes both $a$ and $b$ odd. If some unit $c$ made both representatives even, then $-c$ would make both odd, since $[-y]_r=r-[y]_r$ reverses parity. Hence $[ca]_r$ and $[cb]_r$ have opposite parity for every unit $c$. Equivalently, $[ca]_r$ and $[c(-b)]_r$ have the same parity for every unit $c$. Since $r$ is odd, $2^t\in\Z_r^\times$ for every $t\geq1$. Taking $c=2^t$ and applying \eqref{eq:binary-determination} yields $a=-b$, a contradiction.
\end{proof}

By Lemma~\ref{lem:symmetries}, unit multiplication preserves both the terrace and its $R^*$ relation. After applying such a unit and relabelling the resulting terrace, difference word, and distinguished differences again as $P,D,a,b$, we may assume that the positive representatives $a,b\in\{1,\ldots,r-1\}$ are odd. Since $a=d_0$ and $b=d_1$, the cyclic difference word has the form
\begin{equation}\label{eq:D-decomposition}
D=(a,b,D'),
\end{equation}
where $D'$ lists all remaining nonzero residues.
\subsection{Complementary prefix lifts}

Define two permutations of $[r]$ and their common sum by
\begin{equation}\label{eq:A-B}
  A=(a,r,b,D'),\qquad B=(b,r,a,D'),
  \qquad S=\frac{r(r+1)}2,
\end{equation}
and set
\begin{equation}\label{eq:ABAB}
  \mathcal W=ABAB.
\end{equation}
We refer to the initial three-letter blocks $(a,r,b)$ and $(b,r,a)$ as the two local $R^*$-port blocks. The tail prefix sums of $D$ modulo $r$ are
\begin{equation}\label{eq:D-prefixes}
 \{P_i-P_0:0\leq i\leq r-2\}=\Z_r\setminus\{b\}.
\end{equation}
Indeed, the entries $P_i$ run through $\Z_r\setminus\{0\}$, so the only missing residue is $-P_0=b$. Moreover, $a=P_1-P_0$ is among the tail prefix sums.

\begin{lemma}\label{lem:prefix-multiplicity}
Modulo \(r\), the tail prefix sums of \(A\) contain \(a\) twice and omit \(b\), while those of \(B\) contain \(b\) twice and omit \(a\). Every other residue occurs exactly once in both lists.
\end{lemma}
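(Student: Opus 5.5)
We compute the tail prefix sums of \(A\) and \(B\) directly and compare them with the tail prefix sums of \(D\) given in \eqref{eq:D-prefixes}. The word \(A=(a,r,b,D')\) has length \(r\), so it has \(r\) tail prefix sums \(\Sigma_0(A),\ldots,\Sigma_{r-1}(A)\). Since \(r\equiv0\pmod r\), inserting the letter \(r\) after the first letter simply repeats the preceding prefix modulo \(r\). Concretely, modulo \(r\) the tail prefix sums of \(A\) are
\[
0,\ a,\ a+r\equiv a,\ a+r+b\equiv a+b,\ \ldots,
\]
and from index \(3\) onward they coincide, term by term, with the tail prefix sums \(\Sigma_2(D),\ldots,\Sigma_{r-2}(D)\) of \(D=(a,b,D')\), shifted by the extra \(r\equiv0\). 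In other words, the multiset of tail prefixes of \(A\) modulo \(r\) equals the multiset \(\{\Sigma_i(D):0\leq i\leq r-2\}\) together with one extra copy of \(\Sigma_1(D)=a\). The lengths match: \(D\) contributes \(r-1\) tail prefixes and the extra copy brings the total to \(r\).

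By \eqref{eq:D-prefixes}, the tail prefixes of \(D\) are exactly \(\Z_r\setminus\{b\}\), each residue appearing once. These \(r-1\) values \(P_i-P_0\) are distinct because the \(P_i\) are. Hence the tail prefixes of \(A\) are \(\Z_r\setminus\{b\}\) with \(a\) occurring twice. Since \(a\neq b\)? We need this to check that \(a\) is indeed among the listed residues rather than the omitted one. This holds because \(a=P_1-P_0\) is itself a tail prefix of \(D\), which the paper already notes. So \(a\) appears twice, \(b\) is omitted, and every other residue appears exactly once.

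For \(B=(b,r,a,D')\), the same computation gives tail prefixes \(0,\ b,\ b,\ a+b,\) followed by the same sequence as before. The point to handle is that the first three prefixes of \(A\) were \(0,a,a\) and reached \(a+b\) at index \(3\), while those of \(B\) are \(0,b,b\) and reach \(b+a=a+b\) at the same index. All later prefixes therefore coincide with those of \(A\). So the multiset for \(B\) is obtained from the multiset for \(A\) by replacing the two copies of \(a\) with two copies of \(b\).

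To finish, we must confirm that \(a\) and \(b\) do not appear elsewhere in the common tail, so that replacing the copies changes multiplicities correctly. From the \(A\) count, the tail prefixes from index \(3\) onward are \(\Z_r\setminus\{0,a,b\}\): the full list \(\Z_r\setminus\{b\}\) minus the values \(0\) and \(a\), each of which is used exactly once among the first three positions apart from the duplicate of \(a\). This requires \(a\neq0\), \(b\neq0\) and \(a\neq b\). The difference word is rainbow, so \(a=d_0\neq d_1=b\). Therefore \(B\) has \(b\) twice, omits \(a\), and hits every other residue exactly once.

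The only real obstacle is this bookkeeping: verifying that \(a\), \(b\), \(0\) and \(a+b\) are distinct where needed. The facts required are that \(a\) and \(b\) are nonzero, \(a\neq b\) by rainbowness, and \(a+b\neq0\) from \(a\neq-b\). The last ensures that \(a+b\) lies in the common tail and is not confused with \(0\).
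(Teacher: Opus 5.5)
Your proof is correct and follows the paper's argument: the inserted letter \(r\) duplicates the prefix \(a\) in the tail-prefix list of \(D\), which is \(\Z_r\setminus\{b\}\) by \eqref{eq:D-prefixes}, and \(B\) differs from \(A\) only in its first three prefixes \(0,b,b\) versus \(0,a,a\), after which both reach \(a+b\) and run through the same suffix \(D'\). Your extra bookkeeping, which identifies the common tail as \(\Z_r\setminus\{0,a,b\}\), makes explicit what the paper leaves implicit, and \(a+b\neq0\) does not need to be assumed separately, since it already follows from the distinctness of the tail prefixes of \(D\).
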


\begin{proof}
Inserting $r$ immediately after the first letter $a$ of $D$ duplicates
the current prefix $a$ without changing residues modulo $r$.  This proves
the assertion for $A$ using \eqref{eq:D-prefixes}.  For $B$, compare the
initial prefix lists
\[
  0,a,a,a+b\qquad\text{and}\qquad0,b,b,a+b
\]
of $(a,r,b)$ and $(b,r,a)$.  Thereafter both words traverse the same suffix
$D'$ from the same accumulated sum.
\end{proof}

\begin{lemma}\label{lem:prefix-lifts}
One has
\begin{equation}\label{eq:prefix-partition}
  U_{2r}(A)\mathbin{\dot\cup}\bigl(r+U_{2r}(B)\bigr)=\Z_{2r}.
\end{equation}
\end{lemma}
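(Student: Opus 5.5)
The plan is to verify \eqref{eq:prefix-partition} one residue class modulo \(r\) at a time, using the Chinese remainder theorem. Since \(r\) is odd, \(\Z_{2r}\cong\Z_2\times\Z_r\). Adding \(r\) to an integer keeps its residue modulo \(r\) and flips its parity, so \(y\) and \(y+r\) are the two distinct elements of \(\Z_{2r}\) lying over \(y\bmod r\). Both \(A\) and \(B\) have length \(r\), so each side of \eqref{eq:prefix-partition} is built from \(r\) tail prefix sums, \(2r\) in total. It therefore suffices to show that, for every \(x\in\Z_r\), the tail prefix sums of \(A\) congruent to \(x\) together with the sums \(r+\Sigma_i(B)\) congruent to \(x\) are exactly two elements, and that these two elements are distinct in \(\Z_{2r}\). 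This simultaneously gives disjointness and coverage.

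By Lemma~\ref{lem:prefix-multiplicity}, each residue \(x\notin\{a,b\}\) occurs exactly once among the tail prefix sums of \(A\) and once among those of \(B\), while \(a\) occurs twice for \(A\) and not for \(B\), and \(b\) occurs twice for \(B\) and not for \(A\). In every case the count is two. The remaining work is to pin down the relevant integers, not just their residues modulo \(r\), by comparing the integer prefix sums of \(A\) and \(B\) position by position.

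\begin{itemize}
\item \emph{Initial block.} The initial tail prefix sums are \(0,a,a+r\) for \(A\) and \(0,b,b+r\) for \(B\).
\item \emph{Common suffix.} For every \(i\geq3\) we have \(\Sigma_i(A)=\Sigma_i(B)\) as integers, since both words reach \(a+r+b\) after three letters and then follow the same suffix \(D'\). This range includes \(i=3\), which covers the residue \(a+b\).
\item \emph{Residue \(0\) and the common positions.} At \(i=0\), and at each \(i\geq3\), the two contributions are \(\Sigma_i(A)\) and \(\Sigma_i(A)+r\). These are distinct in \(\Z_{2r}\), which handles residue \(0\) and every residue \(x\notin\{a,b\}\) other than those from the initial block.
\item \emph{Residue \(a\).} All of its contributions come from \(A\), namely \(a\) and \(a+r\), which are the two distinct lifts of \(a\).
\item \emph{Residue \(b\).} All of its contributions come from \(r+U_{2r}(B)\), namely \(r+b\) and \(r+(b+r)\equiv b\), again the two distinct lifts.
\end{itemize}

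The only point that needs care is the bookkeeping. One must use Lemma~\ref{lem:prefix-multiplicity} to know that residues \(a\) and \(b\) receive no other contribution, and check that the prefix list \(0,a,a+r,a+b+r,\ldots\) of \(A\) is aligned index by index with the list \(0,b,b+r,a+b+r,\ldots\) of \(B\). Once this alignment is established, the argument is immediate, and the oddness normalization of \(a\) and \(b\) is not needed here; it enters only in later arguments.
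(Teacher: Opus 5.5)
Your proof is correct and follows essentially the paper's route: residue $a$ gets its two lifts $a,a+r$ from the port block of $A$; residue $b$ gets $b+r$ and $b$ from the shifted port block of $B$. Every other residue occurs at a position ($i=0$ or $i\geq3$) where the prefix sums of $A$ and $B$ coincide, so adding $r$ supplies the complementary lift. Your explicit treatment of $i=0$ and your use of Lemma~\ref{lem:prefix-multiplicity} to exclude stray contributions make the paper's brief bookkeeping more careful, and you are right that the oddness of $a$ and $b$ plays no role here.
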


\begin{proof}
The two occurrences of the tail prefix sum \(a\) in \(A\) lie immediately before and after the inserted \(r\), so they are the two distinct lifts \(a\) and \(a+r\). Likewise, the two occurrences of the tail prefix sum \(b\) in \(B\) are the two lifts \(b\) and \(b+r\). For every \(u\notin\{a,b\}\), the corresponding tail prefix sum in \(A\) equals the corresponding tail prefix sum in \(B\) modulo \(2r\): after their three-letter port blocks, both words have accumulated \(a+b+r\) and follow the same suffix \(D'\). Each residue modulo \(r\) has exactly two lifts modulo \(2r\), differing by \(r\); adding \(r\) to the tail prefix sum in \(B\) supplies the complementary lift. Thus each element of \(\Z_{2r}\) occurs exactly once in \eqref{eq:prefix-partition}.

\end{proof}

For $u\notin\{a,b\}$, let $p_u\in\Z_{2r}$ denote the common residue modulo $2r$ of the corresponding tail prefix sums in $A$ and $B$. Figure~\ref{fig:prefix-lifts} summarizes this lift pattern.

\begin{figure}[ht]
\centering
\fbox{\begin{minipage}{0.92\linewidth}
\centering
\textbf{Prefix-lift schematic.}
\[
\begin{array}{c|c|c|c}
\text{base residue}&U_{2r}(A)&U_{2r}(B)&
U_{2r}(A)\ \dot\cup\ (r+U_{2r}(B))\\ \hline
a&\{a,a+r\}&\varnothing&\{a,a+r\}\\
b&\varnothing&\{b,b+r\}&\{b,b+r\}\\
u\notin\{a,b\}&\{p_u\}&\{p_u\}&\{p_u,p_u+r\}
\end{array}
\]
\end{minipage}}
\caption{The local $R^*$ port controls every tail-prefix lift modulo $2r$.}
\label{fig:prefix-lifts}
\end{figure}

\subsection{Hamiltonicity and balance}

If $r=4h+1$, then $(r+1)/2$ is odd and
\begin{equation}\label{eq:S-congruences}
  S\equiv r\pmod{2r},\qquad 2S\equiv2r\pmod{4r}.
\end{equation}

\begin{proposition}\label{prop:abab-hamilton}
The word $\mathcal W=ABAB$ is a Hamilton step word on $\Z_{4r}$.
\end{proposition}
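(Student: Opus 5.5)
We must show two things: the total sum of \(\mathcal W=ABAB\) vanishes modulo \(4r\), and the \(4r\) tail prefix sums of \(\mathcal W\) are distinct modulo \(4r\), hence cover \(\Z_{4r}\). The first is immediate. Each of \(A\) and \(B\) is a permutation of \([r]\), so \(\Sigma_{4r}(\mathcal W)=4S=2r(r+1)\). Since \(r+1\) is even, this is \(\equiv0\pmod{4r}\).

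For coverage, I will write the tail prefix sums blockwise. The four blocks start at accumulated sums \(0,S,2S,3S\). So the tail prefix sums of \(\mathcal W\) are
\[
U_{4r}(A)\cup\bigl(S+U_{4r}(B)\bigr)\cup\bigl(2S+U_{4r}(A)\bigr)\cup\bigl(3S+U_{4r}(B)\bigr),
\]
where the integer tail prefix sums of \(A\) and \(B\) lie in \([0,S)\). I reduce first modulo \(2r\), using \eqref{eq:S-congruences}. Since \(S\equiv r\) and \(2S\equiv 2r\equiv0\pmod{2r}\), the first two blocks reduce to \(U_{2r}(A)\mathbin{\dot\cup}(r+U_{2r}(B))\), which is all of \(\Z_{2r}\) by Lemma~\ref{lem:prefix-lifts}. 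The same holds for the last two blocks. Hence every residue modulo \(2r\) is hit exactly twice among the \(4r\) tail prefix sums: once from the pair of blocks \((1,2)\) and once from the pair \((3,4)\).

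It then remains to show that these two hits are distinct modulo \(4r\), i.e.\ that they differ by \(2r\). The pair \((3,4)\) is exactly the pair \((1,2)\) translated by \(2S\). So if \(x\) is the integer tail prefix sum in blocks \(1,2\) with a given residue modulo \(2r\), then the corresponding one in blocks \(3,4\) is \(x+2S\). Since \(2S\equiv2r\pmod{4r}\) by \eqref{eq:S-congruences}, these two values are the two distinct lifts of that residue to \(\Z_{4r}\). Therefore all \(4r\) tail prefix sums are distinct modulo \(4r\), so \(U_{4r}(\mathcal W)=\Z_{4r}\).

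The only delicate point is bookkeeping: the tail prefix sums of blocks \(2,3,4\) are offsets of the block-internal tail prefix sums, not arbitrary values. This is where the congruences \(S\equiv r\pmod{2r}\) and \(2S\equiv2r\pmod{4r}\) matter, and they require \((r+1)/2\) to be odd, i.e.\ \(r\equiv1\pmod4\). I expect no further obstacle, since Lemma~\ref{lem:prefix-lifts} already carries the combinatorial content.
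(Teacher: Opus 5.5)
Your proposal is correct and follows essentially the same route as the paper's proof. Both reduce the first $AB$ half modulo $2r$ via $S\equiv r\pmod{2r}$ and Lemma~\ref{lem:prefix-lifts}, use the shift $2S\equiv 2r\pmod{4r}$ to get the complementary lifts from the second half, and close with $4S\equiv 0\pmod{4r}$.
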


\begin{proof}
The tail prefix sums in the first $AB$ half, modulo $2r$, are
\[
 U_{2r}(A)\ \dot\cup\ (S+U_{2r}(B))=\Z_{2r}
\]
by \eqref{eq:S-congruences} and Lemma~\ref{lem:prefix-lifts}.  The second
copy begins after displacement $2S\equiv2r\pmod{4r}$ and therefore gives
the other lift of every residue modulo $2r$.  All $4r$ tail prefix sums are
distinct modulo $4r$.  Finally $4S$ is divisible by $4r$, since $r$ is
odd, so the walk closes and is Hamilton.
\end{proof}

For $s\in[r]$, let $p_A(s)$ and $p_B(s)$ be the tail prefix sums of its
occurrences in $A$ and $B$. Comparing the two port blocks gives
\begin{equation}\label{eq:relative-prefix}
 p_B(s)-p_A(s)=
 \begin{cases}
   b+r,&s=a,\\
   -(a+r),&s=b,\\
   b-a,&s=r,\\
   0,&s\notin\{a,b,r\}.
 \end{cases}.
\end{equation}
Since $a,b,r$ are all odd, every value in \eqref{eq:relative-prefix} is even.

\begin{proposition}\label{prop:abab-balance}
For every $s\in[r]$, the four occurrences of $s$ in $ABAB$ have tails in
the four distinct residue classes modulo $4$.
\end{proposition}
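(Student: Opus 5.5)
Fix $s\in[r]$. In $\mathcal W=ABAB$ the four occurrences of $s$ have tails (as integers, before reduction modulo $4r$)
\[
p_A(s),\quad S+p_B(s),\quad 2S+p_A(s),\quad 3S+p_B(s).
\]
The approach is to reduce these modulo $4$ and show that they give four distinct residues. Since $4\mid 4r$, reducing tails modulo $4r$ and then modulo $4$ is consistent. So it suffices to compute the four integers modulo $4$.

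The key input is the value of $S$ modulo $4$. Write $r=4h+1$. Then $S=r\cdot\frac{r+1}{2}=(4h+1)(2h+1)$, which is odd, and $S\equiv 2h+1\pmod 4$. In particular $S$ is odd and $2S\equiv2\pmod4$. Now set $\delta(s):=p_B(s)-p_A(s)$. By \eqref{eq:relative-prefix} and the odd normalization of $a,b$ (with $r$ odd), $\delta(s)$ is even, so $\delta(s)\equiv 0$ or $2\pmod4$. Modulo $4$ the four tails become
\[
p,\quad p+S+\delta,\quad p+2,\quad p+3S+\delta\equiv p+2+S+\delta,
\]
where $p=p_A(s)$. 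The first and third differ by $2$. The second and fourth also differ by $2$, since $2S\equiv2\pmod4$. The pair $\{p,p+2\}$ is one parity class, and the pair $\{p+S+\delta,\,p+S+\delta+2\}$ is the other, because $S+\delta$ is odd. Hence all four residues are distinct.

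The only real step is checking the parity of each case of \eqref{eq:relative-prefix}. This needs $a,b,r$ odd, which was arranged by Lemma~\ref{lem:odd-normalization}, and needs $r\equiv1\pmod4$ so that $S$ is odd. Everything else is arithmetic modulo $4$. I would also note that for $s\notin\{a,b,r\}$ the occurrences of $s$ lie in $D'$ at the same positions in $A$ and $B$, so $\delta=0$ there. Care is only needed in confirming the three port-block values, which I would simply take from \eqref{eq:relative-prefix} as stated.
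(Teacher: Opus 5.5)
Your proof is correct and follows essentially the same route as the paper. Both arguments use the four tail prefix sums \(p_A(s),\,S+p_B(s),\,2S+p_A(s),\,3S+p_B(s)\), the facts that \(S\) is odd and \(2S\equiv2\pmod4\), and the evenness of \(p_B(s)-p_A(s)\) from \eqref{eq:relative-prefix}. Together these separate the two parity classes and then the two residues within each class.
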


\begin{proof}
The four absolute tail prefix sums are
\begin{equation}\label{eq:four-tail-prefixes}
 p_A(s),\quad S+p_B(s),\quad2S+p_A(s),\quad3S+p_B(s).
\end{equation}
Here $S$ is odd, while $p_A(s)$ and $p_B(s)$ have the same parity by
\eqref{eq:relative-prefix}.  The first two prefixes therefore have opposite
parity.  Moreover $2S\equiv2\pmod4$, so the last two give the other residue
within each parity.  The four tails are exactly $0,1,2,3$ modulo $4$.
\end{proof}

Figure~\ref{fig:abab-balance} summarizes the residue pattern used above.

\begin{figure}[ht]
\centering
\fbox{\begin{minipage}{0.90\linewidth}
\centering
\textbf{A fixed step across the four blocks.}
\[
\begin{array}{c|cccc}
\text{block}&\text{first }A&\text{first }B&\text{second }A&\text{second }B\\ \hline
\text{tail}&p_A&S+p_B&2S+p_A&3S+p_B\\
\text{parity}&\epsilon&1-\epsilon&\epsilon&1-\epsilon\\
\text{second-half shift}&0&0&2&2
\end{array}
\]
Here \(\epsilon\in\{0,1\}\) is defined by
\(\epsilon\equiv p_A(s)\pmod2\). Lemma~\ref{lem:prefix-lifts} provides
the Hamiltonicity input; the parity pattern gives balance.
\end{minipage}}
\caption{Balance of the $ABAB$ word.}
\label{fig:abab-balance}
\end{figure}

The preceding two propositions show that the $ABAB$ word is a Hamilton step word and that its four occurrences of each step have tails in the four distinct residue classes modulo $4$. Hence it determines a $4$-layer balanced Hamilton starter in the sense of Definition~\ref{def:starter}.

\begin{theorem}\label{thm:four-exact}
For every $r\equiv1\pmod4$ with $r\geq9$, the digraph $\Cfour{r}$ has a
$4$-layer balanced Hamilton starter.  Its translates by $4$ form a Hamilton
decomposition, and every developed cycle has winding number $(r+1)/2$.
\end{theorem}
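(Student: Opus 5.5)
The plan is to assemble the theorem from the preceding results in three steps: turn the step word into a closed walk, read off Hamiltonicity and balance, and then invoke development and the winding count.

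\textbf{Setting up the word.} Take \(r\equiv1\pmod4\) with \(r\geq9\). Since \(r\) is odd and \(r>5\), Theorem~\ref{thm:rstar} supplies a directed \(R^*\)-terrace of \(\Z_r\). Rotate it so that the \(R^*\) relation reads \(P_1=P_0+P_2\); this gives \(a=d_0\), \(b=d_1\) with \(a\neq-b\). Apply Lemma~\ref{lem:odd-normalization} together with Lemma~\ref{lem:symmetries} to make the representatives of \(a\) and \(b\) odd. Then form \(A\), \(B\) and \(\mathcal W=ABAB\) as in \eqref{eq:A-B}--\eqref{eq:ABAB}.

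\textbf{The starter as a closed walk.} Let \(\mathcal W=(w_0,\ldots,w_{4r-1})\) and define
\[
H:\quad \Sigma_0(\mathcal W)\to\Sigma_1(\mathcal W)\to\cdots\to\Sigma_{4r}(\mathcal W)=\Sigma_0(\mathcal W)\pmod{4r}.
\]
Each letter lies in \([r]\), since \(A\) and \(B\) are permutations of \([r]\). Hence every arc \(\Sigma_i\to\Sigma_i+w_i\) is an arc of \(\Cfour{r}\).

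\textbf{Hamiltonicity and balance.} By Proposition~\ref{prop:abab-hamilton}, \(\mathcal W\) is a Hamilton step word on \(\Z_{4r}\). Its \(4r\) tail prefix sums are therefore distinct and exhaust \(\Z_{4r}\), and the walk closes, so \(H\) is a Hamilton cycle. By Proposition~\ref{prop:abab-balance}, for each step \(s\in[r]\) the four arcs of step \(s\) (one per block) have tails in the four distinct residue classes modulo \(4\). Each letter occurs exactly once in each of the four blocks, so \(H\) contains exactly four arcs of step \(s\). Consequently, for every \((j,s)\in\Z_4\times[r]\), exactly one arc of \(H\) lies in \(\Oarc{j}{s}\), and \(H\) is \(4\)-layer balanced in the sense of Definition~\ref{def:starter}.

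\textbf{Development and winding number.} Lemma~\ref{lem:development} with \(q=4\) shows that \(H,H+4,\ldots,H+4(r-1)\) is a Hamilton decomposition of \(\Cfour{r}\). Proposition~\ref{prop:winding} gives \(\omega(H)=(r+1)/2\). Translation preserves the multiset of steps, so every translate \(H+4k\) has the same winding number.

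\textbf{Main obstacle.} The only delicate point is the bookkeeping in the balance step: we need that the occurrences of a fixed step are exactly one per block. This holds because \(A\) and \(B\) are permutations of \([r]\), which in turn requires that \(r\) itself appears once, via the inserted letter, and that \(D'\) lists the remaining nonzero residues. The real content sits in Propositions~\ref{prop:abab-hamilton} and~\ref{prop:abab-balance}, which we are allowed to use.
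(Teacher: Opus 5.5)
Your proposal is correct and follows the paper's proof essentially step for step. The normalized $R^*$-terrace gives $\mathcal W=ABAB$, and Propositions~\ref{prop:abab-hamilton} and~\ref{prop:abab-balance}, together with the fact that $A$ and $B$ are permutations of $[r]$, yield a $4$-layer balanced Hamilton starter; Lemma~\ref{lem:development} and Proposition~\ref{prop:winding} then give the decomposition and the winding number.
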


\begin{proof}
Since \(r\equiv1\pmod4\) and \(r\ge9\), Theorem~\ref{thm:rstar} gives a directed \(R^*\)-terrace of \(\Z_r\). By Lemma~\ref{lem:odd-normalization} and Lemma~\ref{lem:symmetries}, after multiplying by a suitable unit we may choose the distinguished adjacent differences \(a\) and \(b\) to have odd positive representatives. From this normalized terrace we form the two permutations
\[
A=(a,r,b,D'),\qquad B=(b,r,a,D')
\]
of \([r]\), and set \(\mathcal W=ABAB\).

By Proposition~\ref{prop:abab-hamilton}, \(\mathcal W\) is a Hamilton step word on \(\Z_{4r}\), and hence determines a Hamilton cycle \(H\) in \(\Cfour{r}\). Since both \(A\) and \(B\) are permutations of \([r]\), every step \(s\in[r]\) occurs exactly once in each block and therefore exactly four times in \(\mathcal W\). Proposition~\ref{prop:abab-balance} shows that the tails of these four occurrences lie in the four distinct residue classes modulo \(4\). Consequently, for every \((j,s)\in\Z_4\times[r]\), the cycle \(H\) contains exactly one arc of step \(s\) whose tail is congruent to \(j\pmod4\). Thus \(H\) is a \(4\)-layer balanced Hamilton starter.

Finally, Lemma~\ref{lem:development} implies that the \(r\) translates
\[
H,\ H+4,\ \ldots,\ H+4(r-1)
\]
are pairwise arc-disjoint Hamilton cycles whose union is \(A(\Cfour{r})\), and therefore form a Hamilton decomposition. Each translate has the same step sequence as \(H\), so Proposition~\ref{prop:winding} gives winding number \((r+1)/2\) for every developed cycle. This proves the theorem.
\end{proof}

\section{Chain compatibility and the four-layer path number}
\label{sec:chain-compatibility}

We now use the four-layer Hamilton decomposition to obtain an optimal path
decomposition.  The extra ingredient is a transversal of the developed cycles
whose arcs form one simple directed path.  This condition is not a formal
consequence of a Hamilton decomposition, so we begin with a precise definition
and an equivalent development-colouring formulation.

\subsection{Development colours and chain compatibility}

Let $H$ be a $q$-layer balanced Hamilton starter in $\Cqr{q}{r}$, and write
$H_k=H+kq$ for $k\in\Z_r$.  Since the cycles $H_k$ partition the arc set,
every arc of $\Cqr{q}{r}$ has a unique \emph{development colour}, namely the
unique $k$ for which it belongs to $H_k$.

\begin{definition}\label{def:chain-compatible}
The starter $H$ is \emph{chain-compatible} if there are arcs
\[
  e_k=u_k\longrightarrow v_k\in H_k\qquad(k\in\Z_r)
\]
and a permutation $\sigma$ of $\Z_r$ such that
\[
  v_{\sigma(i)}=u_{\sigma(i+1)}\qquad(0\leq i<r-1)
\]
and the resulting $r+1$ vertices are pairwise distinct.
\end{definition}

Equivalently, the selected arcs form a simple directed path of length $r$ which is rainbow for the development colouring; such a path will be called a \emph{compatible deletion chain} for $H$.

For an explicit colour formula, fix $j\in\Z_q$ and $s\in[r]$.  Balance gives
a unique $\beta_j(s)\in\Z_r$ such that $H$ contains the arc of step $s$ with
tail $j+q\beta_j(s)$. If $x=j+qt$ with $t\in\Z_r$, define
\begin{equation}\label{eq:development-colour}
  \chi(x,s):=t-\beta_j(s)\pmod r.
\end{equation}

\begin{lemma}\label{lem:development-colour}
The arc $x\to x+s$ belongs to $H_{\chi(x,s)}$.  Consequently, a simple
directed path of length $r$ is a compatible deletion chain exactly when its
$r$ arc colours under \eqref{eq:development-colour} are pairwise distinct.
\end{lemma}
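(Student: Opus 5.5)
The plan is to verify membership directly from the definition of translation, and then read off the second assertion from Definition~\ref{def:chain-compatible}. Write \(x=j+qt\) with \(t\in\Z_r\), and put \(k:=\chi(x,s)=t-\beta_j(s)\). By the choice of \(\beta_j(s)\), which relies on Definition~\ref{def:starter}, the starter \(H\) contains the arc
\[
e:=(j+q\beta_j(s))\to(j+q\beta_j(s)+s).
\]
Translating by \(qk\) sends \(e\) to
\[
(j+q\beta_j(s)+qk)\to(j+q\beta_j(s)+qk+s)=x\to x+s,
\]
because \(\beta_j(s)+k=t\) in \(\Z_r\), and hence \(q(\beta_j(s)+k)\equiv qt\pmod{qr}\). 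Therefore \(x\to x+s\in H+qk=H_k\).

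Before using this, I will check that \(\chi\) is well defined. The integer \(t\) is determined modulo \(r\) by \(x\), since \(qt\) is determined modulo \(qr\). The residue \(j\) is fixed by \(x\bmod q\), which makes sense because \(q\mid qr\).

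By Lemma~\ref{lem:development}, the cycles \(H_k\) partition the arc set. So \(H_{\chi(x,s)}\) is the unique cycle containing the arc, and \(\chi\) is exactly the development colour.

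For the consequence, consider a simple directed path of length \(r\). If it is a compatible deletion chain, then its \(r\) arcs are \(e_k\in H_k\), one for each \(k\in\Z_r\), so their colours are distinct. Conversely, suppose its \(r\) colours are pairwise distinct. Since there are exactly \(r\) colours, they exhaust \(\Z_r\). Label each arc \(e_k\) by its colour \(k\), and let \(\sigma(i)\) be the colour of the \(i\)-th arc along the path. The path structure gives \(v_{\sigma(i)}=u_{\sigma(i+1)}\), and simplicity gives \(r+1\) distinct vertices. This is precisely Definition~\ref{def:chain-compatible}.

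There is no real obstacle here. The only point needing care is the bookkeeping of representatives, namely that \(t\) and \(\beta_j(s)\) live in \(\Z_r\) while the vertices live in \(\Z_{qr}\). I will handle this by the congruence \(q(\beta_j(s)+k)\equiv qt\pmod{qr}\) noted above.
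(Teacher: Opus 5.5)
Your proof is correct and takes essentially the same route as the paper's: you translate the starter arc at tail \(j+q\beta_j(s)\) by \(qk\), solve \(t-k=\beta_j(s)\) in \(\Z_r\), and then use the fact that there are exactly \(r\) colours. The differences are cosmetic. The paper states membership as an ``exactly when'' equivalence, so uniqueness of \(k\) comes directly from balance, whereas you get uniqueness from the partition in Lemma~\ref{lem:development}; you also write out the permutation \(\sigma\) explicitly.
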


\begin{proof}
The translate $H_k$ contains $x\to x+s$ exactly when $H$ contains
$(x-kq)\to(x-kq)+s$.  Writing $x=j+qt$, this is equivalent to
$t-k=\beta_j(s)$ in $\Z_r$, or $k=t-\beta_j(s)$.  The second assertion follows
because there are exactly $r$ development colours.
\end{proof}

The same formula yields a convenient partial-sum criterion.  Let
$s_0,\ldots,s_{r-1}\in[r]$ and put
\[
  z_i=z_0+\sum_{h<i}s_h\pmod{qr}\quad(0\leq i\leq r),
  \qquad
  z_i=j_i+qt_i,\quad j_i\in\Z_q,\quad t_i\in\Z_r\quad(0\leq i<r).
\]

\begin{corollary}\label{cor:partial-sum-criterion}
The step word $(s_0,\ldots,s_{r-1})$ gives a compatible deletion chain from
$z_0$ if and only if
\begin{enumerate}
  \item $z_0,z_1,\ldots,z_r$ are pairwise distinct in $\Z_{qr}$; and
  \item the residues
  \[
    t_i-\beta_{j_i}(s_i)\qquad(0\leq i<r)
  \]
  are pairwise distinct in $\Z_r$.
\end{enumerate}
\end{corollary}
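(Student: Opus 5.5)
The plan is to unwind Definition~\ref{def:chain-compatible} for the specific sequence of arcs generated by the step word, and then apply Lemma~\ref{lem:development-colour} to each arc. Put $e_i:=z_i\to z_{i+1}$ for $0\le i<r$. By construction $z_{i+1}=z_i+s_i$ in $\Z_{qr}$ and $s_i\in[r]$, so each $e_i$ is an arc of $\Cqr{q}{r}$. The walk $z_0\to z_1\to\cdots\to z_r$ has length $r$.

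Condition~1 is exactly the statement that this walk is a simple directed path, since a walk of length $r$ is a simple path precisely when its $r+1$ vertices are pairwise distinct. For the colours, write $z_i=j_i+qt_i$ with $j_i\in\Z_q$ and $t_i\in\Z_r$. Lemma~\ref{lem:development-colour} then says that $e_i$ lies in $H_{\chi(z_i,s_i)}$, where
\[
\chi(z_i,s_i)=t_i-\beta_{j_i}(s_i).
\]
So condition~2 says exactly that the $r$ arcs $e_i$ carry pairwise distinct development colours. Because there are exactly $r$ colours, this is the same as each $H_k$ containing exactly one of them.

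For sufficiency, assume conditions~1 and~2. Define $\sigma(i)$ to be the colour of $e_i$. Condition~2 makes $\sigma$ a bijection of $\Z_r$. Relabel by setting $e_{\sigma(i)}:=z_i\to z_{i+1}$. Then $v_{\sigma(i)}=z_{i+1}=u_{\sigma(i+1)}$, and condition~1 makes the $r+1$ vertices distinct, so Definition~\ref{def:chain-compatible} is satisfied.

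For necessity, assume the step word yields a compatible deletion chain from $z_0$. By definition the traversed path is simple, which gives condition~1. It also uses one arc from each $H_k$, so its colours are pairwise distinct, which gives condition~2.

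The only point needing care is bookkeeping rather than mathematics. The decomposition $z_i=j_i+qt_i$ has to be well defined with $t_i\in\Z_r$, and it is, because $z_i$ is taken modulo $qr$. The quantity $\beta_{j_i}(s_i)$ has to exist and be unique, which follows from $q$-layer balance of $H$. Once these are checked, the corollary follows directly from Lemma~\ref{lem:development-colour}.
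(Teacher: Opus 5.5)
Your proposal is correct and takes the same approach as the paper. Condition~1 is exactly simplicity of the $r$-arc walk, and by Lemma~\ref{lem:development-colour} condition~2 says the arcs carry pairwise distinct development colours, hence exactly one from each $H_k$. Your explicit construction of $\sigma$ and your checks that $t_i$ and $\beta_{j_i}(s_i)$ are well defined just spell out what the paper's two-line proof leaves implicit.
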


\begin{proof}
The first condition says that the directed walk is a simple $r$-arc path.
The second says, by Lemma~\ref{lem:development-colour}, that its arcs use all
development colours exactly once.
\end{proof}

\subsection{The uniform \texorpdfstring{step-$2$}{step-2} chain for \texorpdfstring{$ABAB$}{ABAB}}

Assume $r\equiv1\pmod4$ and $r\geq9$, and retain the notation of
Section~\ref{sec:four}:
\[
  A=(a,r,b,D'),\qquad B=(b,r,a,D'),\qquad
  \mathcal W=ABAB,\qquad S=\frac{r(r+1)}2.
\]
Let $H$ denote the $4$-layer balanced Hamilton starter determined by $\mathcal W$. The three values $a,b,r$ are odd, so the even step $2$ lies in $D'$.  Its
position within $A$ and $B$ is the same.  Moreover, the tail prefix sum immediately preceding it is the same in both blocks, because the two port blocks have the common sum $a+b+r$ and are followed by the same ordered suffix. In the notation above, let $p:=p_A(2)=p_B(2)$. The four tails of step $2$ in $H$ are therefore
\begin{equation}\label{eq:step2-tails}
  p,\quad p+S,\quad p+2S,\quad p+3S\pmod{4r}.
\end{equation}

\begin{lemma}\label{lem:abab-chain}
For every $r\equiv1\pmod4$ with $r\geq9$, the path
\begin{equation}\label{eq:four-chain}
  0\longrightarrow2\longrightarrow4\longrightarrow\cdots\longrightarrow2r
\end{equation}
is rainbow for the development colouring of the $ABAB$ starter.  Hence the
starter is chain-compatible.
\end{lemma}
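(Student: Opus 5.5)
We will verify the two conditions of Corollary~\ref{cor:partial-sum-criterion} for the constant step word \((2,2,\ldots,2)\) of length \(r\) starting at \(z_0=0\), so \(z_i=2i\) for \(0\le i\le r\).

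\emph{Simplicity.} Since \(2r<4r\), the vertices \(0,2,\ldots,2r\) are pairwise distinct in \(\Z_{4r}\). This gives condition~(1).

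\emph{Setting up the colours.} Every arc of the chain has step \(2\), and its tail \(2i\) has layer \(j_i\in\{0,2\}\). Write \(2i=j_i+4t_i\).
\begin{itemize}
\item If \(i\) is even, then \(j_i=0\) and \(t_i=i/2\).
\item If \(i\) is odd, then \(j_i=2\) and \(t_i=(i-1)/2\).
\end{itemize}
Only \(\beta_0(2)\) and \(\beta_2(2)\) matter, so we read them off from \eqref{eq:step2-tails}. Since \(S\) is odd and \(2S\equiv2r\pmod{4r}\) by \eqref{eq:S-congruences}, the even tails among \(p,p+S,p+2S,p+3S\) are \(p_e\) and \(p_e+2r\), where \(p_e\in\{p,p+S\}\) is the even one. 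One of these is \(\equiv0\pmod4\) and the other \(\equiv2\pmod4\), because \(2r\equiv2\pmod4\). Writing \(p_e=\epsilon+4m\) with \(\epsilon\in\{0,2\}\), we get \(p_e+2r=(2-\epsilon)+4\bigl(m+(r-1+\epsilon)/2\bigr)\), using \(r-1\equiv0\pmod4\) and \(2r=2+4\cdot\frac{r-1}{2}\). The key conclusion is
\[
\beta_2(2)-\beta_0(2)\equiv c\pmod r,\qquad c=\pm\Bigl(m'+\tfrac{r-1}{2}\Bigr)\text{-type constant}.
\]
The real content is that this difference equals \((r-1)/2\) or \((r+1)/2\), up to a sign convention, which we will pin down carefully.

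\emph{Injectivity of the colours.} Even indices give colours \(i/2-\beta_0\), which run through \(\{0,1,\ldots,(r-1)/2\}-\beta_0\). That is \((r+1)/2\) consecutive residues. Odd indices give colours \((i-1)/2-\beta_2\) for \(i=1,3,\ldots,r-2\), which run through \(\{0,\ldots,(r-3)/2\}-\beta_2\). That is \((r-1)/2\) consecutive residues. Condition~(2) holds exactly when the second interval fills the complement of the first, i.e.\ when
\[
-\beta_2\equiv \tfrac{r+1}{2}-\beta_0\pmod r,\qquad\text{equivalently}\qquad \beta_2(2)-\beta_0(2)\equiv\tfrac{r-1}{2}\pmod r.
\]

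\emph{Main obstacle.} The hardest step is the bookkeeping that shows this congruence holds regardless of whether \(p\) or \(p+S\) is the even tail. We handle it by computing directly. From \(2r=2+4\cdot\frac{r-1}{2}\), the tail \(\equiv2\pmod 4\) is the \(\equiv0\) tail plus \(2r\) or minus \(2r\); in the minus case it is \(4r-2r\), which is the same element of \(\Z_{4r}\). In the plus case, \(\beta_2=\beta_0+(r-1)/2\) directly. In the minus case, we use \(-2r=2-4\cdot\frac{r+1}{2}\), which gives \(\beta_2=\beta_0-(r+1)/2\equiv\beta_0+(r-1)/2\pmod r\). In both cases the complement condition holds. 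By Corollary~\ref{cor:partial-sum-criterion} the path \eqref{eq:four-chain} is therefore a compatible deletion chain, and \(H\) is chain-compatible.

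Notably, the value of \(p\) itself never enters. Only the shift \(2S\equiv 2r\) matters, which is why the chain is uniform in \(r\).
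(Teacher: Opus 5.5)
Your proof is correct and follows essentially the paper's route. Both arguments use that $S$ is odd and $2S\equiv 2r\pmod{4r}$ to identify the layer-$2$ starter arc of step $2$ as the layer-$0$ one shifted by $2r$, and then compute the development colours separately for even and odd $i$. The paper merges the two cases into the single affine bijection $k_i=2^{-1}i-\tau$. You instead check that the two runs of consecutive residues are complementary, which is the same identity $\beta_2(2)-\beta_0(2)\equiv\frac{r-1}{2}\pmod r$.

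The vague intermediate display (the ``$\pm(m'+\frac{r-1}{2})$-type constant'', with $m'$ undefined and an unspecified sign) is superseded by your final computation. It should simply be deleted.
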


\begin{proof}
Because $r\equiv1\pmod4$, the integer $(r+1)/2$ is odd, and hence so is $S=r(r+1)/2$. Therefore the four tails in \eqref{eq:step2-tails} occupy the four residue classes modulo $4$. Let $X$ be the one congruent to $0$, and write $X=4\tau$ with $\tau\in\Z_r$. By \eqref{eq:S-congruences}, the occurrence two blocks later has tail $Y:=X+2S\equiv X+2r\pmod{4r}$. Since $X\equiv0\pmod4$ and $r\equiv1\pmod4$, we have $Y\equiv2\pmod4$.

The $i$th arc of \eqref{eq:four-chain} has tail $2i$. If $i$ is even, its tail residue is $0$, so it is obtained by translating the starter arc at $X$; hence $X+4k_i\equiv2i\pmod{4r}$ and $k_i=i/2-\tau$ in $\Z_r$. If $i$ is odd, the relevant starter arc is the one at $Y=X+2r$, and $k_i=(2i-X-2r)/4=(i-r)/2-\tau\pmod r$. Since $r$ is odd, $2$ is a unit in $\Z_r$; write $2^{-1}$ for its inverse. In the odd case, $(i-r)/2$ is an integer, and $2\cdot\frac{i-r}{2}=i-r\equiv i\pmod r$, hence $\frac{i-r}{2}\equiv2^{-1}i\pmod r$. The even case similarly gives $i/2\equiv2^{-1}i\pmod r$. Thus both cases are described by
\begin{equation}\label{eq:four-colour-formula}
  k_i=2^{-1}i-\tau\pmod r.
\end{equation}
The map $i\mapsto2^{-1}i-\tau$ is a bijection on $\Z_r$, since $2^{-1}$ is a unit; hence the colours $k_i$ run through every element of $\Z_r$ exactly once. The integers $0,2,\ldots,2r$ are distinct and lie in $[0,4r-1]$. Therefore \eqref{eq:four-chain} is a simple rainbow path of $r$ arcs. Apply Lemma~\ref{lem:development-colour}.
\end{proof}

\subsection{Exact path number in the four-layer family}

Combining the Hamilton decomposition from Theorem~\ref{thm:four-exact} with the compatible deletion chain from Lemma~\ref{lem:abab-chain} yields the exact path number.

\begin{theorem}\label{thm:four-path}
For every $r\equiv1\pmod4$ with $r\geq9$,
\begin{equation}\label{eq:four-path-number}
  \pn(\Cfour{r})=r+1.
\end{equation}
\end{theorem}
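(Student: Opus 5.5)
The plan is to prove the two inequalities separately. The lower bound is the counting argument from the introduction. The upper bound comes from deleting the compatible deletion chain of Lemma~\ref{lem:abab-chain} from the Hamilton decomposition of Theorem~\ref{thm:four-exact}.

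For the lower bound, note that \(\Cfour{r}\) has \(4r^2\) arcs. Any simple directed path in it visits at most \(4r\) distinct vertices and so has at most \(4r-1\) arcs. Hence every path decomposition uses at least \(\lceil 4r^2/(4r-1)\rceil\) paths. This ceiling is \(r+1\), because \(4r^2=r(4r-1)+r\) with \(0<r<4r-1\).

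For the upper bound, take the \(4\)-layer balanced Hamilton starter \(H\) from \(\mathcal W=ABAB\). By Theorem~\ref{thm:four-exact} (via Lemma~\ref{lem:development}), its translates \(H_k=H+4k\), \(k\in\Z_r\), partition \(A(\Cfour{r})\) into \(r\) arc-disjoint Hamilton cycles. By Lemma~\ref{lem:abab-chain}, the path \(P_r:0\to2\to\cdots\to2r\) is simple and rainbow for the development colouring. So for each \(k\) exactly one arc \(e_k\) of \(P_r\) lies in \(H_k\), and \(P_r=\{e_k:k\in\Z_r\}\).

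Deleting \(e_k\) from the Hamilton cycle \(H_k\) leaves a Hamilton path \(H_k-e_k\), which is a simple directed path on all \(4r\) vertices. The \(r\) paths \(H_k-e_k\) are pairwise arc-disjoint, since the \(H_k\) are. Their union is \(A(\Cfour{r})\setminus P_r\). Adding \(P_r\), which is simple by Lemma~\ref{lem:abab-chain}, gives a partition of \(A(\Cfour{r})\) into \(r+1\) simple directed paths, so \(\pn(\Cfour{r})\le r+1\).

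There is no real obstacle left at this point. The only things to check are that the ceiling computation is exact and that the rainbow property yields exactly one deleted arc per cycle, not merely at most one. The latter holds because \(P_r\) has \(r\) arcs with \(r\) pairwise distinct colours in \(\Z_r\), so its colours are a bijection onto \(\Z_r\).
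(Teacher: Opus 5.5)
Your proposal is correct and follows the paper's proof: the counting lower bound \(\lceil 4r^2/(4r-1)\rceil=r+1\), and the upper bound from deleting the rainbow step-\(2\) chain of Lemma~\ref{lem:abab-chain} from the \(ABAB\) Hamilton decomposition of Theorem~\ref{thm:four-exact}. Your check that the \(r\) distinct colours force exactly one deleted arc per \(H_k\) makes explicit a point the paper leaves to Definition~\ref{def:chain-compatible} and Lemma~\ref{lem:development-colour}.
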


\begin{proof}
Let $H$ be the $ABAB$ starter constructed in the proof of
Theorem~\ref{thm:four-exact}.  By Lemma~\ref{lem:abab-chain}, this starter
is chain-compatible.  Choose one deletion-chain arc $e_k$ from each developed
Hamilton cycle $H_k$.  Then each $H_k-e_k$ is a
simple directed Hamilton path, and the deleted arcs themselves form one
simple directed path of $r$ arcs.  Since the developed cycles are pairwise
arc-disjoint, the $r$ remaining Hamilton paths share no arcs with one another
or with the deletion path, and together they cover every arc of the digraph. Hence
$\pn(\Cfour{r})\leq r+1$.

On the other hand, $\Cfour{r}$ has $4r^2$ arcs and a simple directed path has
at most $4r-1$ arcs.  Therefore
\[
  \pn(\Cfour{r})
  \geq\left\lceil\frac{4r^2}{4r-1}\right\rceil
  =\left\lceil r+\frac{r}{4r-1}\right\rceil
  =r+1.
\]
The two bounds prove \eqref{eq:four-path-number}.
\end{proof}

\section{The asymptotic three-layer construction}\label{sec:three}

Let $r$ be odd and let $T=(t_0,\ldots,t_{r-2})$ be a directed rotational
terrace with difference word $D=(d_0,\ldots,d_{r-2})$.  Put
\begin{equation}\label{eq:m}
  m:=\frac{r-1}{2}.
\end{equation}
We first lift $T$ to $\Z_{qr}$ for a general number $q$ of layers.  This
separates the algebra of the lift from the special three-layer trade.

\subsection{A general terrace lift}

Assume $q\geq3$ and $\gcd(q,r)=1$.  Every nonzero $x\in\Z_r$ has $q$ lifts
$x,x+r,\ldots,x+(q-1)r$ in $\Z_{qr}$.  Define a spanning directed graph
$F_q(T)$ by its outgoing-step function $D_T:\Z_{qr}\to[r]$, where
$D_T(x)=s$ means that the outgoing arc at $x$ is $x\to x+s$, and set
\begin{equation}\label{eq:q-lift}
 D_T(t_i+\ell r)=d_i
 \quad(0\leq\ell<q),
 \qquad
 D_T(\ell r)=r.
\end{equation}

\begin{proposition}\label{prop:q-lift}
If $\gcd(q,r)=1$, then $F_q(T)$ is a $q$-layer balanced directed $1$-factor of
$\Cqr{q}{r}$: every vertex has indegree and outdegree one, and for every
$(j,s)\in\Z_q\times[r]$ exactly one arc of step $s$ has tail congruent to
$j$ modulo $q$.
\end{proposition}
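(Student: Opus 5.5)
The definition \eqref{eq:q-lift} assigns exactly one step to every vertex, so $F_q(T)$ has outdegree one everywhere. Every $x\in\Z_{qr}$ reduces modulo $r$ either to $0$, in which case $x=\ell r$ for a unique $\ell\in\{0,\ldots,q-1\}$, or to a unique nonzero residue $t_i$. In the latter case $x=t_i+\ell r$ for a unique $\ell$, where we use integer representatives $t_i\in\{1,\ldots,r-1\}$. So $D_T$ is well defined. The plan is to check three things: indegree one, the balance condition, and that all steps lie in $[r]$. The last is immediate, since $d_i\in\{1,\ldots,r-1\}$.

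\emph{Indegree.} It suffices to show that $x\mapsto x+D_T(x)$ is a bijection of $\Z_{qr}$. Since the set is finite, surjectivity (or injectivity) is enough. I will work fibrewise over $\Z_r$.
\begin{itemize}
\item The vertices $\ell r$ map to $(\ell+1)r$, which permutes the fibre over $0$.
\item The vertices $t_i+\ell r$ map to $t_i+d_i+\ell r$, where $d_i$ is taken as an integer. Modulo $r$ this is $t_{i+1}$, which lies in the fibre over $t_{i+1}$.
\end{itemize}
Since $i\mapsto i+1$ permutes $\Z_{r-1}$, the fibre over $t_i$ is sent into the fibre over $t_{i+1}$. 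As $\ell$ ranges over $\{0,\ldots,q-1\}$, the images $t_i+d_i+\ell r$ are $q$ distinct elements of $\Z_{qr}$ all congruent to $t_{i+1}$ modulo $r$, so they fill that whole fibre. Hence the map is a bijection and every indegree is one. This uses that the terrace order is a single cyclic ordering of $\Z_r\setminus\{0\}$.

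\emph{Balance.} Fix a step $s$.
\begin{itemize}
\item If $s=r$, the tails are $\ell r$ for $0\le\ell<q$. Since $\gcd(q,r)=1$, $r$ is a unit modulo $q$, so $\ell r$ runs through all residues modulo $q$, each once.
\item If $s\in\{1,\ldots,r-1\}$, then $s=d_i$ for exactly one $i$, because the difference word lists each nonzero residue once. The tails of step $s$ are therefore exactly $t_i+\ell r$ for $0\le\ell<q$. Modulo $q$ these are $t_i+\ell r$, again a complete residue system because $r$ is invertible modulo $q$.
\end{itemize}
So for every $(j,s)$ there is exactly one arc of step $s$ with tail $\equiv j\pmod q$. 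Here the residue class modulo $q$ of an element of $\Z_{qr}$ is well defined because $q\mid qr$.

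The main point needing care is the indegree argument, specifically that adding the integer representative $d_i$ maps the $q$ lifts of $t_i$ bijectively onto the $q$ lifts of $t_{i+1}$. This holds because translation by a fixed integer is injective on $\Z_{qr}$ and preserves the reduction modulo $r$ up to the shift by $d_i$. Everything else is bookkeeping from Definition~\ref{def:terrace} and the coprimality of $q$ and $r$.
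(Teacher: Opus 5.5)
Your proof is correct and follows essentially the same route as the paper. For the $1$-factor property, the lifts of $t_i$ are mapped bijectively onto the lifts of $t_{i+1}$, and the zero lifts cycle under step $r$. For balance, the invertibility of $r$ modulo $q$ spreads the $q$ tails $t_i+\ell r$ (or $\ell r$) of each step over all residue classes modulo $q$.
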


\begin{proof}
For each base edge $t_i\to t_{i+1}$, the $q$ arcs in
\eqref{eq:q-lift} map the $q$ lifts of $t_i$ bijectively to the $q$ lifts of
$t_{i+1}$. The lifts of \(0\) form a directed \(q\)-cycle under step \(r\), since \(r\) has additive order \(q\) in \(\mathbb{Z}_{qr}\). Hence every vertex has one incoming and one outgoing arc.

For a fixed nonzero step $d_i$, its $q$ tails are
$t_i,t_i+r,\ldots,t_i+(q-1)r$.  Since $r$ is a unit modulo $q$, these tails
occupy all $q$ residue classes modulo $q$.  The same statement holds for
the step $r$ at tails $0,r,\ldots,(q-1)r$. Thus the factor is $q$-layer balanced.
\end{proof}

The cycle structure is controlled by the displacement accumulated during
one base round.  Since $D$ is a permutation of $1,\ldots,r-1$,
\begin{equation}\label{eq:base-round-sum}
  \sum_{i=0}^{r-2}d_i=\frac{r(r-1)}2=mr.
\end{equation}

\begin{proposition}\label{prop:cycle-count}
The nonzero part of $F_q(T)$ consists of $\gcd(q,m)$ cycles, each of length
\(
  \frac{q(r-1)}{\gcd(q,m)}.
\)
The zero lifts form one directed $q$-cycle.
\end{proposition}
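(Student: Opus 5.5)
We prove Proposition~\ref{prop:cycle-count} by following a vertex of the nonzero part around whole base rounds of the terrace and reading off the first return map.

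\textbf{First-return map.} Take a lift $x=t_0+\ell r$ of the terrace entry $t_0$, with $0\le \ell<q$. By \eqref{eq:q-lift}, the outgoing step at any lift of $t_i$ is $d_i$. A lift of $t_i$ therefore goes to $t_i+\ell r+d_i$. Since $t_i+d_i\equiv t_{i+1}\pmod r$, this vertex is again a lift of $t_{i+1}$. So any walk in $F_q(T)$ that starts at a lift of $t_0$ visits lifts of $t_0,t_1,\ldots,t_{r-2}$ in cyclic order. After $r-1$ arcs it returns to a lift of $t_0$. The total displacement is $\sum_i d_i=mr$ by \eqref{eq:base-round-sum}.

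It follows that the first-return map on the $q$ lifts of $t_0$ is $t_0+\ell r\mapsto t_0+(\ell+m)r$. Identifying the lifts with $\ell\in\Z_q$, this is the translation $\ell\mapsto \ell+m$ on $\Z_q$.

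\textbf{Counting cycles.} The translation $\ell\mapsto\ell+m$ on $\Z_q$ has orbits equal to the cosets of $\langle m\rangle=\langle\gcd(q,m)\rangle$. There are $\gcd(q,m)$ such orbits, each of size $q/\gcd(q,m)$.

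Every cycle of the nonzero part passes through some lift of $t_0$, because each base round passes through the $t_0$ column. Hence the cycles of the nonzero part correspond bijectively to these orbits. A cycle whose orbit has size $q/\gcd(q,m)$ consists of that many full base rounds of $r-1$ arcs each. Its length is therefore $q(r-1)/\gcd(q,m)$.

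As a check, the total length is $\gcd(q,m)\cdot q(r-1)/\gcd(q,m)=q(r-1)$. This equals the number of nonzero lifts, so nothing is missed.

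\textbf{Zero lifts.} This part is already contained in the proof of Proposition~\ref{prop:q-lift}. The lifts $0,r,\ldots,(q-1)r$ are joined by step-$r$ arcs, and $r$ has additive order $q$ in $\Z_{qr}$. So they form one directed $q$-cycle.

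The argument is essentially routine. The only step needing care is the claim that lifts are preserved along each arc. This depends on the integer representative $d_i=[t_{i+1}-t_i]_r$ satisfying $t_i+d_i\equiv t_{i+1}$ modulo $r$ but not necessarily modulo $qr$. The resulting carries are exactly what produce the accumulated shift of $m$ lifts per round, so the round sum must be computed as the integer $mr$, not merely modulo $r$.
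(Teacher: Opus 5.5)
Your proof is correct and takes essentially the same route as the paper. Both arguments track one full base round and use $\sum_i d_i=mr$ to get the translation $\ell\mapsto\ell+m$ on the lift coordinate in $\Z_q$. Both then read off $\gcd(q,m)$ orbits of size $q/\gcd(q,m)$, each giving a cycle of length $q(r-1)/\gcd(q,m)$. Your explicit carry remark and your total-length check add slightly more detail than the paper's version.
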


\begin{proof}
Traversing all \(r-1\) edges of the base terrace changes a lift by the total displacement \(mr\). Hence, in the lift coordinate \(\ell\in\Z_q\), one complete base round induces the translation \(\ell\longmapsto \ell+m\). Thus the orbits of the translation \(\ell\mapsto\ell+m\) on \(\Z_q\) are in one-to-one correspondence with the directed cycles in the nonzero part of \(F_q(T)\). This translation has \(\gcd(q,m)\) orbits, each of length \(q/\gcd(q,m)\). Each orbit therefore requires \(q/\gcd(q,m)\) base rounds, and since each base round contains \(r-1\) vertices, the corresponding directed cycle has length \(q(r-1)/\gcd(q,m)\). Hence the nonzero part of \(F_q(T)\) consists of \(\gcd(q,m)\) directed cycles of the asserted length. The assertion about the zero lifts follows directly from the last rule \eqref{eq:q-lift}.
\end{proof}

\begin{corollary}\label{cor:three-cycle-type}
If $r\equiv5\pmod6$, then $F_3(T)$ has exactly two cycles: one cycle $L$
of length $3(r-1)$ on the nonzero lifts, and the $3$-cycle
\begin{equation}\label{eq:zero-cycle}
  Z=(0,r,2r).
\end{equation}
\end{corollary}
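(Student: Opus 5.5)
This follows directly from Proposition~\ref{prop:cycle-count} with $q=3$. First I check the hypotheses. Since $r\equiv5\pmod6$, $r$ is odd and $r\equiv2\pmod3$, so $\gcd(3,r)=1$, and Proposition~\ref{prop:q-lift} and Proposition~\ref{prop:cycle-count} apply.

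Next I compute $m=(r-1)/2$ modulo $3$. Write $r=6k+5$. Then $m=(6k+4)/2=3k+2$, so $m\equiv2\pmod3$ and $\gcd(3,m)=1$. Proposition~\ref{prop:cycle-count} therefore says that the nonzero part of $F_3(T)$ is a single directed cycle $L$ of length $3(r-1)/1=3(r-1)$. This length is exactly the number of nonzero lifts: there are $r-1$ nonzero base residues, each with $3$ lifts.

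Finally, the zero lifts $0,r,2r$ carry the step-$r$ arcs $0\to r\to 2r\to 3r=0$ in $\Z_{3r}$. These form the $3$-cycle $Z=(0,r,2r)$, which is the last clause of Proposition~\ref{prop:cycle-count}. The vertex sets of $L$ and $Z$ together account for all $3(r-1)+3=3r$ vertices. By Proposition~\ref{prop:q-lift}, $F_3(T)$ is a $1$-factor, so it has exactly these two cycles.

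There is no real obstacle here. The only point needing care is the congruence $m\equiv2\pmod3$, which is what the hypothesis $r\equiv5\pmod6$ is for. For comparison, $r\equiv1\pmod6$ would give $m\equiv0\pmod3$ and three nonzero cycles.
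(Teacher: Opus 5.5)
Your proposal is correct and matches the paper's proof: both check that $m=(r-1)/2\equiv2\pmod3$, so $\gcd(3,m)=1$, and then apply Proposition~\ref{prop:cycle-count} with $q=3$ to get the single long cycle $L$ together with the zero $3$-cycle $Z$.
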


\begin{proof}
Here $m=(r-1)/2\equiv2\pmod3$, so $\gcd(3,m)=1$.  Apply
Proposition~\ref{prop:cycle-count}.
\end{proof}

For the remainder of this section, assume $r\equiv5\pmod6$. In particular,
$\gcd(3,r)=1$.

\subsection{A four-arc balanced trade}

To turn $F_3(T)$, which has the two cycles described in Corollary~\ref{cor:three-cycle-type}, into a Hamilton cycle, we introduce a local four-arc trade that preserves both the directed $1$-factor property and $3$-layer balance.

Let $A,B,C\in\{1,\ldots,r-1\}$ and define four base points in $\Z_r$ by
\begin{equation}\label{eq:general-port}
  P=C-A-B,\qquad Q=C-B,\qquad M=-A,\qquad W=C-A.
\end{equation}
Here and below, to say that a terrace contains a directed path means that the
vertex sequence of the path occurs consecutively, with the displayed
orientation, in the cyclic ordering of the terrace. Suppose that the terrace
contains the directed path $P\to Q\to C$ and the edge $M\to W$. Their
differences are $A,B,C$, respectively. We now work in the three-layer lift \(F_3(T)\subseteq\Cthree{r}\). For
$j\in\Z_3$, let $\widetilde{x}_j$ denote the unique lift of $x\in\Z_r$ that
is congruent to $j$ modulo $3$; uniqueness follows from the Chinese remainder
theorem, since $\gcd(3,r)=1$.  At a fixed residue $j$, put
\begin{equation}\label{eq:trade-tails}
  x_1=\widetilde P_j,\quad x_2=\widetilde Q_j,\quad
  x_3=\widetilde M_j,\quad z=\widetilde0_j.
\end{equation}
The old steps at those tails are $(A,B,C,r)$.  Replace them by
\begin{equation}\label{eq:trade}
  (A,B,C,r)\longmapsto(B,r,A,C).
\end{equation}

\begin{lemma}\label{lem:head-identities}
If
\begin{equation}\label{eq:trade-congruences}
  A\equiv r\pmod3,\qquad B\equiv C\pmod3,
\end{equation}
then the new heads in \eqref{eq:trade} are a permutation of the old heads.
More precisely,
\begin{equation}\label{eq:head-table-identities}
  x_1+B=x_3+C,\quad x_2+r=x_1+A,\quad
  x_3+A=z+r,\quad z+C=x_2+B
\end{equation}
in $\Z_{3r}$.
\end{lemma}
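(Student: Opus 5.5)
The plan is to verify each of the four identities in \eqref{eq:head-table-identities} separately, in two stages. First check it modulo $r$, using the definitions \eqref{eq:general-port}. Then check it modulo $3$, using the fact that each $\widetilde{x}_j$ and $z$ is congruent to $j$ modulo $3$ together with the congruences \eqref{eq:trade-congruences}. Since $\gcd(3,r)=1$, the Chinese remainder theorem identifies $\Z_{3r}$ with $\Z_3\times\Z_r$, so agreement modulo both $3$ and $r$ gives equality in $\Z_{3r}$.

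For the reduction modulo $r$, note that the step $r$ vanishes there, $z$ reduces to $0$, and $x_1,x_2,x_3$ reduce to $P,Q,M$. The four identities become:
\[
P+B=M+C,\qquad Q=P+A,\qquad M+A=0,\qquad C=Q+B.
\]
Each is immediate from $P=C-A-B$, $Q=C-B$ and $M=-A$. For instance, $P+B=C-A=M+C$.

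For the reduction modulo $3$, all tails are $\equiv j$, so the identities reduce to
\[
B\equiv C,\qquad r\equiv A,\qquad A\equiv r,\qquad C\equiv B\pmod 3,
\]
which is exactly \eqref{eq:trade-congruences}.

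Finally, I must show that the new heads form a permutation of the old ones. The old heads are $x_1+A$, $x_2+B$, $x_3+C$ and $z+r$. The new heads are $x_1+B$, $x_2+r$, $x_3+A$ and $z+C$. By the four identities these equal, in order, $x_3+C$, $x_1+A$, $z+r$ and $x_2+B$, which is a cyclic relabelling of the old heads.

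No step is a real obstacle. The only care needed is to remember that $r\neq0$ as a step in $\Z_{3r}$, even though it vanishes modulo $r$, and that it is handled correctly in the modulo-$3$ check.
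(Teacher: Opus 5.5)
Your proposal is correct and follows the paper's proof essentially verbatim: you reduce each identity modulo $r$ to a consequence of \eqref{eq:general-port}, reduce modulo $3$ to the congruences \eqref{eq:trade-congruences}, and combine the two via $\gcd(3,r)=1$. Your explicit matching of new heads to old heads is also the content of Table~\ref{tab:trade}.
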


\begin{proof}
Modulo $r$, the four equalities reduce, in order, to
\[
 P+B=M+C,\quad Q=P+A,\quad M+A=0,\quad C=Q+B,
\]
which follow from \eqref{eq:general-port}.  Modulo $3$, all tails in
\eqref{eq:trade-tails} are $j$; the required equality of head residues is
exactly $B\equiv C$ for the first and fourth pairs and $r\equiv A$ for the
second and third.  Since $r$ and $3$ are coprime, equality modulo both
implies equality in $\Z_{3r}$.
\end{proof}

\begin{proposition}\label{prop:trade-balance}
Under \eqref{eq:trade-congruences}, the four-arc trade preserves both the
directed $1$-factor property and $3$-layer balance.
\end{proposition}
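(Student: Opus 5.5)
The plan is to treat the trade as an operation on the outgoing-step function and check the two properties separately. Before the trade, \(F_3(T)\) is a directed 1-factor by Proposition~\ref{prop:q-lift}. We also need the four tails \(x_1,x_2,x_3,z\) to be distinct vertices carrying the old steps \((A,B,C,r)\). First check that the base points \(P,Q,M,0\) are distinct in \(\Z_r\): \(P,Q,M\) are distinct vertices of the terrace because they are tails of the distinct differences \(A,B,C\), and all three are nonzero. Their lifts at the same residue \(j\) are therefore distinct. By \eqref{eq:q-lift}, the outgoing steps at these lifts are the differences \(A\) (at \(\widetilde P_j\)), \(B\) (at \(\widetilde Q_j\)), \(C\) (at \(\widetilde M_j\), since \(M\to W\) has difference \(C\)), and \(r\) (at \(\widetilde0_j\)).

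For the 1-factor property, note that the trade changes only the out-arcs at these four tails, so every vertex still has outdegree one. Indegree can change only at the old heads \(x_1+A,\ x_2+B,\ x_3+C,\ z+r\). By Lemma~\ref{lem:head-identities}, the new heads \(x_1+B,\ x_2+r,\ x_3+A,\ z+C\) equal \(x_3+C,\ x_1+A,\ z+r,\ x_2+B\) respectively. So the multiset of heads is unchanged: each old head receives exactly one of the new arcs, and no other vertex gains or loses an in-arc. Hence indegrees remain one. We should also confirm the new arcs lie in \(\Cthree{r}\), which holds because \(A,B,C,r\in[r]\).

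For balance, observe that all four tails lie in layer \(j\), and the multiset of steps leaving layer \(j\) from these tails is \(\{A,B,C,r\}\) both before and after. Moreover, \(A,B,C,r\) are pairwise distinct: \(A,B,C\) are distinct terrace differences in \(\{1,\ldots,r-1\}\). Thus for each \(s\in\{A,B,C,r\}\), layer \(j\) loses exactly one arc of step \(s\) and gains exactly one. All other (layer, step) counts are untouched, so the count is exactly one for every \((j',s)\) and \(3\)-layer balance is preserved.

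I expect the main subtlety to be the distinctness bookkeeping in the first step: \(P,Q,M\) must be distinct and nonzero so that the four tails are genuinely different arcs of \(F_3(T)\), and \(A,B,C\) must be distinct so that the step-count argument is a bijection. If \(M\) could coincide with \(P\) or \(Q\), the hypothesis that the edge \(M\to W\) has difference \(C\) distinct from \(A,B\) rules it out, since each terrace vertex has a unique outgoing difference. Otherwise the argument is a direct reading of Lemma~\ref{lem:head-identities}.
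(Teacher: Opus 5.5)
Your proof is correct and follows the paper's argument: outdegrees stay one because only the out-arcs at the four tails are replaced, indegrees stay one because Lemma~\ref{lem:head-identities} shows the four new heads are a permutation of the old ones, and balance holds because all four tails lie in layer \(j\) with the same step multiset \(\{A,B,C,r\}\) before and after. One point needs fixing: your extra distinctness check is circular, since you deduce \(P,Q,M\) distinct from \(A,B,C\) distinct and \(A,B,C\) distinct from the edges being distinct. The direct argument is this: \(M=P\) forces \(C=B\) by \eqref{eq:general-port}, hence \(Q=C-B=0\); and \(M=Q\) forces \(W=C\), because \(Q\) has the unique successor \(C\), hence \(A=0\). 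Either way a contradiction. The multiset phrasing of balance used in the paper needs no distinctness of \(A,B,C\) at all.
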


\begin{proof}
The trade changes exactly one outgoing arc at each of four tails, so every
outdegree remains one.  Lemma~\ref{lem:head-identities} says that the four
old heads and four new heads agree as multisets, so every indegree also
remains one.  All four changed tails have the same residue $j$ modulo $3$,
and the old and new step multisets are both $\{A,B,C,r\}$.  Hence for each
step the number of occurrences at each tail residue is unchanged.
\end{proof}

Table~\ref{tab:trade} records the corresponding head permutation.

\begin{table}[ht]
\centering
\caption{The four-arc trade as an exact head permutation.}
\label{tab:trade}
\begin{tabular}{c c c c}
\toprule
tail&old step&old head&new step/new head\\
\midrule
$x_1$&$A$&$x_1+A=x_2+r$&$B$, head $x_1+B=x_3+C$\\
$x_2$&$B$&$x_2+B=z+C$&$r$, head $x_2+r=x_1+A$\\
$x_3$&$C$&$x_3+C=x_1+B$&$A$, head $x_3+A=z+r$\\
$z$&$r$&$z+r=x_3+A$&$C$, head $z+C=x_2+B$\\
\bottomrule
\end{tabular}
\end{table}

\subsection{The cyclic-order splice}

For distinct vertices $u,v,w\in V(L)$, say that $u,v,w$ occur in this
cyclic order on $L$ if, starting at $u$ and following the orientation of
$L$, one encounters $v$ before $w$.  For $x\in V(L)$, write $\succL(x)$
for the successor of $x$ on $L$.

\begin{lemma}\label{lem:splice}
Assume $r\equiv5\pmod6$.  If $x_1,x_3,x_2$ occur in this cyclic order
on $L$, then the factor obtained from $F_3(T)$ by the trade
\eqref{eq:trade} is a Hamilton cycle on $\Z_{3r}$.
\end{lemma}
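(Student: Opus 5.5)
The plan is to trace the new successor function explicitly. By Proposition~\ref{prop:trade-balance}, the traded factor is a directed $1$-factor, so it suffices to show it has a single cycle. By Corollary~\ref{cor:three-cycle-type}, $F_3(T)$ consists of the long cycle $L$ and the $3$-cycle $Z=(0,r,2r)$.

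First I would name the old heads, reading them off Table~\ref{tab:trade}:
\[
y_1:=x_1+A=x_2+r,\quad y_2:=x_2+B=z+C,\quad y_3:=x_3+C=x_1+B,\quad y_4:=z+r=x_3+A .
\]
The tails $x_1,x_2,x_3$ are lifts of the nonzero base points $P,Q,M$, so they lie on $L$. Their old arcs $x_i\to y_i$ are arcs of $L$, so $y_i=\succL(x_i)$ for $i=1,2,3$. The tail $z=\widetilde0_j$ lies on $Z$, and its old arc $z\to y_4$ is an arc of $Z$. The new arcs, again by Table~\ref{tab:trade}, are
\[
x_1\to y_3,\qquad x_2\to y_1,\qquad x_3\to y_4,\qquad z\to y_2 .
\]

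Next I would cut the old cycles at the four traded arcs. Since $x_1,x_3,x_2$ occur in this cyclic order on $L$ (in particular they are distinct), deleting the three arcs $x_i\to y_i$ splits $L$ into three directed paths, each possibly a single vertex:
\begin{itemize}
\item $\alpha$ from $y_1$ to $x_3$;
\item $\beta$ from $y_3$ to $x_2$;
\item $\gamma$ from $y_2$ to $x_1$.
\end{itemize}
Indeed, following $L$ from $x_1$ one meets $x_1\to y_1$, then $\alpha$ up to $x_3$, then $x_3\to y_3$, then $\beta$ up to $x_2$, then $x_2\to y_2$, then $\gamma$ back to $x_1$. Deleting $z\to y_4$ from $Z$ leaves a path $\delta$ from $y_4$ to $z$, namely $z+r\to z+2r\to z$. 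These four paths partition $\Z_{3r}$.

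Finally I would reassemble the pieces with the new arcs:
\[
x_1\to y_3\xrightarrow{\ \beta\ }x_2\to y_1\xrightarrow{\ \alpha\ }x_3\to y_4\xrightarrow{\ \delta\ }z\to y_2\xrightarrow{\ \gamma\ }x_1 .
\]
This single closed walk uses every path $\alpha,\beta,\gamma,\delta$ exactly once, so it visits every vertex of $\Z_{3r}$ exactly once. Hence the traded factor is a Hamilton cycle.

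The only delicate point is the bookkeeping. One must check that $y_i=\succL(x_i)$, which needs $x_1,x_2,x_3$ distinct and not on $Z$; distinctness comes from the cyclic-order hypothesis, and $P,Q,M\neq0$ keeps them off $Z$. One must also confirm that degenerate segments (e.g.\ $y_1=x_3$) cause no problem, which they do not, since a one-vertex segment is still traversed correctly. The identification of which gap each segment fills relies on the order $x_1,x_3,x_2$; the opposite order would instead split the walk into two cycles.
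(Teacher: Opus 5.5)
Your proof is correct and is the same argument as the paper's: your segments $\alpha,\beta,\gamma,\delta$ are exactly the paper's $L_{13},L_{32},L_{21},Z_z$, your reassembled closed walk is the paper's cycle \eqref{eq:spliced-cycle}, and your explicit check that $y_i=\succL(x_i)$ spells out what the paper leaves implicit. One harmless slip is in your closing aside: with the opposite order $x_1,x_2,x_3$ the traded factor splits into three cycles, not two. The $L$-segments ending at $x_1$ and at $x_2$ each close up on their own, and the third joins with the segment from $Z$; this plays no role in the proof.
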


\begin{proof}
Cut $L$ immediately after $x_1,x_3,x_2$ and cut the zero cycle after $z$.
Let $L_{13}$ be the directed segment from $\succL(x_1)$ to $x_3$,
$L_{32}$ the segment from $\succL(x_3)$ to $x_2$, and $L_{21}$ the segment
from $\succL(x_2)$ to $x_1$.
Let $Z_z$ be the segment beginning at the old successor of $z$ and ending
at $z$.  The cyclic-order hypothesis makes the four segments pairwise
vertex-disjoint, and together they cover all vertices.

The head identities show that the new cycle is
\begin{equation}\label{eq:spliced-cycle}
 x_1\longrightarrow L_{32}\longrightarrow x_2
 \longrightarrow L_{13}\longrightarrow x_3
 \longrightarrow Z_z\longrightarrow z
 \longrightarrow L_{21}\longrightarrow x_1.
\end{equation}
Every segment is traversed once, so \eqref{eq:spliced-cycle} is Hamilton.
\end{proof}

\subsection{The fixed-port construction}

The fixed parameters are
\begin{equation}\label{eq:fixed-ABC}
  A=r-3,\qquad B=1,\qquad C=7.
\end{equation}
Equation \eqref{eq:general-port} then yields
\begin{equation}\label{eq:fixed-PQMW}
  (P,Q,M,W)=(9,6,3,10).
\end{equation}
Thus the required terrace incidences are supplied by the single path
\begin{equation}\label{eq:fixed-path}
  9\to6\to7\to5\to8\to3\to10.
\end{equation}

\begin{table}[ht]
\centering
\caption{Vertices and colours of the prescribed port path.}
\label{tab:fixed-path}
\begin{tabular}{c c c c c c c}
\toprule
edge&$9\to6$&$6\to7$&$7\to5$&$5\to8$&$8\to3$&$3\to10$\\
\midrule
colour $y-x$&$r-3$&$1$&$r-2$&$3$&$r-5$&$7$\\
role&$A$&$B$&separator&separator&separator&$C$\\
\bottomrule
\end{tabular}
\end{table}

For $r\equiv5\pmod6$ with $r\geq11$, the seven displayed vertices are
nonzero and pairwise distinct, and the six displayed colours are also
nonzero and pairwise distinct.  Hence the path in Table~\ref{tab:fixed-path}
is a legitimate rainbow path in $\Gamma_r$.  Given a terrace containing
\eqref{eq:fixed-path}, fix $j\in\Z_3$.  We call the specialization of the
four-arc trade with $(A,B,C)=(r-3,1,7)$ at tail residue $j$ the
\emph{fixed-port trade at residue $j$}, and the resulting terrace-lift
construction the \emph{fixed-port construction at residue $j$}. When the
residue is immaterial, we omit the qualifier ``at residue $j$''.

\begin{lemma}\label{lem:fixed-order}
Let $r\equiv5\pmod6$, and suppose $T$ contains
\eqref{eq:fixed-path}.  For any fixed tail residue $j$ modulo $3$, the lifts
$x_1,x_3,x_2$ of $P=9$, $M=3$, and $Q=6$, respectively, occur in this
cyclic order on $L$.
\end{lemma}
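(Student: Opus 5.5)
The plan is to parametrize the long cycle $L$ explicitly by walking along it from $x_1$, and to read off the positions at which $x_3$ and $x_2$ are first reached. By Corollary~\ref{cor:three-cycle-type}, $L$ is a single directed cycle of length $3(r-1)$ on all nonzero lifts. By the lift rule \eqref{eq:q-lift}, following $L$ from a lift of $t_i$ simply follows the terrace order $t_i\to t_{i+1}\to\cdots$ in the base. The integer value grows by the partial sums of the difference word, and each complete base round adds $mr$ by \eqref{eq:base-round-sum}. Hence every vertex of $L$ has a well-defined position $0\le\text{pos}<3(r-1)$ measured from $x_1$. The base vertex $t_{i+k}$ at round $\rho\in\{0,1,2\}$ sits at position $\rho(r-1)+k$. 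I would identify which round contains the lift with residue $j$ modulo $3$.

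Start at $x_1=\widetilde 9_j$, at position $0$. Along \eqref{eq:fixed-path}, within round $0$ the lifts of $6,7,5,8,3$ occur at positions $1,\dots,5$. Their values are $x_1$ plus the partial sums of the colours in Table~\ref{tab:fixed-path}. I will record these values modulo $3$, using $r\equiv2\pmod3$.

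For the lift of $3$: the accumulated displacement is $(r-3)+1+(r-2)+3+(r-5)=3r-6\equiv0\pmod3$. So the round-$0$ lift of $3$ has residue $j$, hence equals $x_3$, at position $5$.

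For the lift of $6$: the displacement is $r-3\equiv2\pmod3$. So the round-$0$ lift of $6$ has residue $j+2$, not $j$.

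Next I account for later rounds. Each full round adds $mr$, and $m\equiv2$, $r\equiv2\pmod3$, so each round adds $1$ to the residue. Thus the round-$\rho$ lift of $6$ has residue $j+2+\rho$, which equals $j$ exactly for $\rho=1$. Therefore $x_2$ sits at position $(r-1)+1=r$. Since $\gcd(3,m)=1$, each base vertex appears exactly once per round, in rounds $0,1,2$, and its three lifts there are distinct. This confirms that these are the unique occurrences of $x_3$ and $x_2$.

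It remains to compare positions: $0<5<r<3(r-1)$, using $r\ge11$ (the range in which \eqref{eq:fixed-path} is legitimate). So starting from $x_1$ along $L$ one meets $x_3$ before $x_2$, as claimed.

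The main obstacle is bookkeeping rather than ideas. One point is to make precise that the displacement along $L$ is the integer partial sum, so that the residue modulo $3$ is tracked correctly when values wrap modulo $3r$. Since $3\mid 3r$, wrapping does not affect residues mod $3$, and I would note this explicitly. The other point is to justify the round decomposition of $L$ via the translation $\ell\mapsto\ell+m$ from Proposition~\ref{prop:cycle-count}.
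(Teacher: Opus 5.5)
Your proposal is correct and follows the paper's proof essentially step for step. The displacement $3r-6\equiv0\pmod 3$ places $x_3$ five arcs after $x_1$, and the round shift $mr\equiv1\pmod 3$ places $x_2$ one base round after the first lift of $6$, at distance $r>5$. Your extra remarks (wrapping modulo $3r$ does not affect residues modulo $3$, the occurrences are unique, and $r\geq 11$ is needed for $5<r$) only make explicit what the paper leaves implicit.
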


\begin{proof}
The five-edge segment from $9$ to $3$ has positive displacement
\begin{equation}\label{eq:five-displacement}
 (r-3)+1+(r-2)+3+(r-5)=3r-6\equiv0\pmod3.
\end{equation}
Therefore the lift of $3$ with the same tail residue occurs exactly five
edges after the chosen lift of $9$.

After the first edge $9\to6$, the tail residue changes by
$r-3\equiv2\pmod3$.  Since $r\equiv5\pmod6$, we have
\[
  m=\frac{r-1}{2}\equiv2\pmod3
  \qquad\text{and}\qquad
  r\equiv2\pmod3.
\]
Thus a complete base round changes the actual residue modulo $3$ by
\[
  mr\equiv2\cdot2\equiv1\pmod3.
\]
The lift of $6$ with the original residue is therefore reached exactly one
base round after the first occurrence of $6$.  Its directed distance from
$x_1$ is $1+(r-1)=r$.  Since $0<5<r$, starting at $x_1$ one encounters
$x_3$ before $x_2$.  Thus $x_1,x_3,x_2$ occur in the claimed cyclic order
on $L$.
\end{proof}

The only remaining input is the existence of a directed rotational terrace containing \eqref{eq:fixed-path}. Section~\ref{sec:prescribed} derives this for all sufficiently large \(r\) from the rainbow Hamilton-path theorem of M\"uyesser and Pokrovskiy~\cite[Theorem~6.9]{MuyesserPokrovskiy2025}.

\begin{theorem}
\label{thm:three-asymptotic}
There exists a non-explicit constant $r_0$ such that, for every
\[
  r\equiv5\pmod6,\qquad r\geq r_0,
\]
and every $j\in\Z_3$, the fixed-port construction at residue $j$ yields a
$3$-layer balanced Hamilton starter of the digraph $\Cthree{r}$.  Its
translates by $3$ form a Hamilton decomposition, and every developed cycle
has winding number $(r+1)/2$.
\end{theorem}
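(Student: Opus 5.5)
The proof will assemble the ingredients already established, taking as its only external input the result of Section~\ref{sec:prescribed}. First, I will invoke the prescribed-path extension derived there from \cite[Theorem~6.9]{MuyesserPokrovskiy2025}: there is a constant \(r_0\geq11\) such that, for every \(r\equiv5\pmod6\) with \(r\geq r_0\), the group \(\Z_r\) has a directed rotational terrace \(T\) containing the rainbow path \eqref{eq:fixed-path}. The path is legitimate because, for \(r\geq11\), its seven vertices are distinct and nonzero and its six colours are distinct, as noted after Table~\ref{tab:fixed-path}. Since \(\gcd(3,r)=1\), Proposition~\ref{prop:q-lift} shows that \(F_3(T)\) is a \(3\)-layer balanced directed \(1\)-factor of \(\Cthree{r}\). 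Corollary~\ref{cor:three-cycle-type} then shows that it consists of the long cycle \(L\) together with the zero cycle \(Z\).

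Next, I will check that the fixed parameters \eqref{eq:fixed-ABC} satisfy the congruences \eqref{eq:trade-congruences}. For the first, \(A=r-3\equiv r\pmod 3\). For the second, \(B=1\equiv7=C\pmod3\). I will also confirm that the terrace incidences needed by the general port are present. The path \eqref{eq:fixed-path} contains \(P\to Q\to C\), namely \(9\to6\to7\), and the edge \(M\to W\), namely \(3\to10\). By \eqref{eq:fixed-PQMW} and Table~\ref{tab:fixed-path}, these have differences \(A\), \(B\) and \(C\). One point needs care: the old steps at \(x_1,x_2,x_3,z\) must be exactly \((A,B,C,r)\). This holds by the lift rule \eqref{eq:q-lift}, because \(D_T\) depends only on the base point and equals \(r\) at lifts of \(0\).

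Proposition~\ref{prop:trade-balance} then shows that the traded factor is still a \(3\)-layer balanced directed \(1\)-factor. Lemma~\ref{lem:fixed-order} supplies the cyclic order \(x_1,x_3,x_2\) on \(L\). I also have to observe that \(z\) lies on \(Z\) and that \(x_1,x_2,x_3\) are distinct vertices of \(L\). This is clear, since \(P,Q,M\) are distinct nonzero base points. With these facts, Lemma~\ref{lem:splice} shows that the traded factor is a single Hamilton cycle \(H\). Together with balance, this makes \(H\) a \(3\)-layer balanced Hamilton starter in the sense of Definition~\ref{def:starter}, for each \(j\in\Z_3\).

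Finally, Lemma~\ref{lem:development} shows that the translates \(H,H+3,\ldots,H+3(r-1)\) form a Hamilton decomposition. Each translate has the same step multiset as \(H\), so Proposition~\ref{prop:winding} gives winding number \((r+1)/2\) for every developed cycle. The only nontrivial step is the existence of a terrace through the prescribed path, and it is deferred to Section~\ref{sec:prescribed}. That is also the source of the non-explicit constant \(r_0\), and I take it as given here. Everything else is bookkeeping: making sure that \(r_0\geq11\), so that the port path is valid, and that the hypotheses of each cited lemma are matched exactly.
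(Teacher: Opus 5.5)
Your proposal is correct and follows the paper's proof essentially step for step. You take the terrace through \eqref{eq:fixed-path} from Corollary~\ref{cor:prescribed-path} and then chain Proposition~\ref{prop:q-lift}, Corollary~\ref{cor:three-cycle-type}, Proposition~\ref{prop:trade-balance}, Lemmas~\ref{lem:fixed-order} and~\ref{lem:splice}, Lemma~\ref{lem:development} and Proposition~\ref{prop:winding} in the same order as the paper; your extra checks, that the old steps at $x_1,x_2,x_3,z$ are exactly $(A,B,C,r)$ and that $x_1,x_2,x_3$ are distinct vertices of $L$ with $z$ on $Z$, are correct bookkeeping that the paper leaves implicit.
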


\begin{proof}
For $r\geq11$, \eqref{eq:fixed-path} is a fixed rainbow directed path of
length $6$ in $\Gamma_r$.  Hence Corollary~\ref{cor:prescribed-path}, applied
with $\ell=6$, gives a directed rotational terrace containing
\eqref{eq:fixed-path} for all sufficiently large $r\equiv5\pmod6$.
Fix $j\in\Z_3$ and form the $3$-layer balanced directed
$1$-factor $F_3(T)$.  By Corollary~\ref{cor:three-cycle-type}, before the
trade this factor has the long cycle $L$ and the $3$-cycle $Z$.  Perform the
fixed-port trade at residue $j$.  The parameters \eqref{eq:fixed-ABC} satisfy
$A\equiv r\pmod3$ and $B\equiv C\pmod3$, so
Proposition~\ref{prop:trade-balance} shows that the trade preserves both the
directed $1$-factor property and $3$-layer balance.
Lemma~\ref{lem:fixed-order} supplies the hypothesis of
Lemma~\ref{lem:splice}, which makes the traded factor Hamilton.  Finally,
Lemma~\ref{lem:development} yields the Hamilton decomposition, while
Proposition~\ref{prop:winding} gives the stated winding number.
\end{proof}

\section{Chain compatibility in the three-layer fixed-port construction}
\label{sec:three-chain}

Unlike in the four-layer case, no information about the position of step \(2\) in the terrace is needed; it suffices that the fixed-port trade does not alter any of the three arcs of step \(2\).

\begin{lemma}\label{lem:untouched-step}
Let $q\geq3$ and $r\geq1$ be coprime integers, and let $H$ be a $q$-layer balanced Hamilton starter with the following
property for some step $s$: the $q$ occurrences of $s$ have tails
\[
  b,b+r,\ldots,b+(q-1)r
\]
for a fixed $b\in\Z_r$.  If $\gcd(qr,s)=1$, then
\begin{equation}\label{eq:general-chain}
  0\longrightarrow s\longrightarrow2s\longrightarrow\cdots\longrightarrow rs
  \quad\text{in }\Z_{qr}
\end{equation}
is a compatible deletion chain.
\end{lemma}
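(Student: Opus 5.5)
We apply Corollary~\ref{cor:partial-sum-criterion} to the constant step word $(s,s,\ldots,s)$ of length $r$ starting at $z_0=0$, so that $z_i=is$ in $\Z_{qr}$. There are two things to check.

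\emph{Simplicity.} Since $\gcd(qr,s)=1$, the element $s$ has additive order $qr$ in $\Z_{qr}$. As $r+1\leq qr$, the vertices $0,s,\ldots,rs$ are pairwise distinct. This gives condition~1.

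\emph{Colour formula.} The hypothesis fixes the $q$ starter tails of step $s$ as the $q$ lifts $b+\ell r$ ($0\le\ell<q$) of a single residue $b\in\Z_r$. By the Chinese remainder theorem, exactly one lift lies in each layer $j$, and this is consistent with balance. Work with the CRT identification $\Z_{qr}\cong\Z_q\times\Z_r$, $x\mapsto(x\bmod q,\,x\bmod r)$, and take an arc $x\to x+s$. By Lemma~\ref{lem:development-colour}, its development colour is the unique $k\in\Z_r$ such that $x-kq$ is a starter tail of step $s$. Translating by $kq$ preserves the class modulo $q$, so the requirement is $x-kq\equiv b\pmod r$. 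Hence
\[
  k\equiv q^{-1}(x-b)\pmod r,
\]
where $q^{-1}$ exists because $\gcd(q,r)=1$. This is consistent with the formula \eqref{eq:development-colour}: the colour depends only on $x\bmod r$, as an affine bijection.

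\emph{Distinct colours.} For the $i$th arc, with tail $is$ for $0\le i<r$, the colour is
\[
  k_i=q^{-1}(is-b)\pmod r.
\]
Since $\gcd(s,r)=1$, which follows from $\gcd(qr,s)=1$, the map $i\mapsto q^{-1}(is-b)$ is a bijection of $\Z_r$. So the $r$ colours are pairwise distinct, which is condition~2. Corollary~\ref{cor:partial-sum-criterion} then gives that \eqref{eq:general-chain} is a compatible deletion chain.

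The only delicate point is the colour computation: we must verify that the development colour depends only on the tail's residue modulo $r$, and not on its layer. This is exactly where the hypothesis that all tails of $s$ are lifts of a single $b\in\Z_r$ is used. Everything else is the straightforward invertibility of $q$ and $s$ modulo $r$.
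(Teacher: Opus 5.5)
Your proposal is correct and follows essentially the same route as the paper: the vertices $0,s,\ldots,rs$ are distinct because $s$ has additive order $qr>r$, and the development colour of the $i$th arc is obtained by reducing the condition $X_i+qk_i=is$ modulo $r$, giving $k_i=q^{-1}(is-b)$, which is a bijection of $\Z_r$ because $q$ and $s$ are units modulo $r$. The only cosmetic difference is that you conclude via Corollary~\ref{cor:partial-sum-criterion}, whereas the paper cites Lemma~\ref{lem:development-colour} directly.
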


\begin{proof}
The vertices in \eqref{eq:general-chain} are pairwise distinct because $s$
has additive order $qr>r$ in $\Z_{qr}$.  Let $X_i$ be the unique lift of $b$
which has the same residue modulo $q$ as the tail $is$; uniqueness holds
because $r$ is a unit modulo $q$.  If the $i$th path arc belongs to $H_{k_i}$,
then
\[
  X_i+qk_i=is\pmod{qr}.
\]
Reducing modulo $r$ and using $X_i\equiv b\pmod r$ gives
\begin{equation}\label{eq:general-chain-colours}
  k_i=q^{-1}(is-b)\pmod r.
\end{equation}
Since \(q\) and \(s\) are units modulo \(r\), the map \(i\longmapsto q^{-1}(is-b)\) is a bijection on \(\Z_r\). Hence the colours \(k_i\) are pairwise distinct for \(0\le i<r\). Thus the simple path is rainbow for the development colouring, and
Lemma~\ref{lem:development-colour} proves chain compatibility.
\end{proof}

\begin{proposition}
\label{prop:three-chain}
Let $r\equiv5\pmod6$ with $r\geq11$.  Every $3$-layer balanced Hamilton
starter obtained from the fixed-port construction at any residue
$j\in\Z_3$, with
\[
  (A,B,C)=(r-3,1,7),
\]
is chain-compatible.  A deletion chain is
\begin{equation}\label{eq:three-chain}
  0\longrightarrow2\longrightarrow4\longrightarrow\cdots\longrightarrow2r.
\end{equation}
\end{proposition}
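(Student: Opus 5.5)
The plan is to apply Lemma~\ref{lem:untouched-step} with $q=3$ and $s=2$. Since $r\equiv5\pmod6$, $r$ is odd and coprime to $3$, so $\gcd(3r,2)=1$ and $\gcd(3,r)=1$. The chain \eqref{eq:general-chain} with $s=2$ is then exactly \eqref{eq:three-chain}. So it only remains to verify the hypothesis on the tails of step $2$: in the traded starter $H$, the three arcs of step $2$ must have tails $b,b+r,b+2r$ for a single $b\in\Z_r$.

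\textbf{Step 1: step $2$ in the untraded lift.} In $F_3(T)$ the step $2=d_i$ occurs for exactly one index $i$, because $D$ is a permutation of $1,\ldots,r-1$ and $r\geq11$ gives $2\leq r-1$. By the lift rule \eqref{eq:q-lift}, its three tails are $t_i,t_i+r,t_i+2r$. So the hypothesis holds before the trade, with $b=t_i$.

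\textbf{Step 2: the trade does not touch step $2$.} The fixed-port trade at residue $j$ changes outgoing arcs only at the four tails $x_1,x_2,x_3,z$. Their old steps are $(A,B,C,r)=(r-3,1,7,r)$ and their new steps are $(1,r,r-3,7)$. None of these equals $2$, because $r-3\geq8$, $r\geq11$, and $1,7\neq2$. Hence no arc of step $2$ is removed. The arcs of step $2$ at the other tails are unchanged, and no new step-$2$ arc is created. So $H$ still contains exactly the three step-$2$ arcs with tails $t_i+\ell r$ for $\ell=0,1,2$.

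\textbf{Step 3: conclude.} By Theorem~\ref{thm:three-asymptotic}, through Proposition~\ref{prop:trade-balance} and Lemma~\ref{lem:splice}, $H$ is a $3$-layer balanced Hamilton starter. The trade is only applied in that setting, where the terrace contains \eqref{eq:fixed-path}; the statement presupposes such a starter. Lemma~\ref{lem:untouched-step} then gives the compatible deletion chain \eqref{eq:three-chain}, so $H$ is chain-compatible.

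The only point needing care is Step 2: one must check that none of the steps involved in the trade, old or new, equals $2$. This is where $r\geq11$ is used, since it rules out $r-3=2$ and $r=2$. The rest is a direct substitution into Lemma~\ref{lem:untouched-step}.
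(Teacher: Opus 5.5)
Your proposal is correct and follows essentially the same route as the paper. In both arguments, the fixed-port trade only permutes the steps $\{r-3,1,7,r\}$, none of which equals $2$ when $r\geq11$. So the three step-$2$ arcs keep their tails $b,b+r,b+2r$ from the untraded lift, and Lemma~\ref{lem:untouched-step} with $q=3$, $s=2$ and $\gcd(3r,2)=1$ gives the deletion chain.
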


\begin{proof}
In the untraded terrace lift, the three occurrences of every nonzero step
$s$ have tails at the three lifts of the base tail of the unique terrace edge
of difference $s$.  The fixed-port trade changes only the four steps
\[
  r-3,\quad1,\quad7,\quad r.
\]
For $r\geq11$, step $2$ is not among them.  Hence its three tails remain
$b,b+r,b+2r$ for some $b\in\Z_r$.  Since $r$ is odd and not divisible by
$3$, one has $\gcd(3r,2)=1$.  Lemma~\ref{lem:untouched-step}, with $q=3$ and
$s=2$, gives \eqref{eq:three-chain}.  Explicitly, its colours are
\begin{equation}\label{eq:three-colour-formula}
  k_i=3^{-1}(2i-b)\pmod r,
\end{equation}
which form a permutation of $\Z_r$.
\end{proof}

\begin{theorem}\label{thm:three-layer-chain-compatible}
There exists a non-explicit constant \(r_0\) such that, for every
\(r\equiv5\pmod6\) with \(r\geq r_0\), the digraph \(\Cthree{r}\)
admits a chain-compatible $3$-layer balanced Hamilton starter.
\end{theorem}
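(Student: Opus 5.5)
This theorem is a combination of Theorem~\ref{thm:three-asymptotic} and Proposition~\ref{prop:three-chain}, so the plan is to chain them, taking care that the constant \(r_0\) is compatible with both.

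First, let \(r_0\) be the constant from Theorem~\ref{thm:three-asymptotic}, enlarged if necessary so that \(r_0\geq11\). The threshold \(r\geq11\) is needed in two places: for the port path \eqref{eq:fixed-path} to be a legitimate rainbow path in \(\Gamma_r\), and in Proposition~\ref{prop:three-chain}. Replacing \(r_0\) by \(\max(r_0,11)\) keeps it a non-explicit constant, since it is still determined by the constant in Corollary~\ref{cor:prescribed-path}.

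Next, fix \(r\equiv5\pmod6\) with \(r\geq r_0\), and fix any residue, say \(j=0\). Theorem~\ref{thm:three-asymptotic} then supplies a directed rotational terrace \(T\) containing \eqref{eq:fixed-path}. The fixed-port construction at residue \(j\), with \((A,B,C)=(r-3,1,7)\), yields a \(3\)-layer balanced Hamilton starter \(H\) of \(\Cthree{r}\).

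Finally, apply Proposition~\ref{prop:three-chain} to this same \(H\). Since \(r\geq11\), step \(2\) lies outside the traded set \(\{r-3,1,7,r\}\). Hence its three tails are still the three lifts of a single base point, and the path \eqref{eq:three-chain} is a compatible deletion chain. So \(H\) is chain-compatible in the sense of Definition~\ref{def:chain-compatible}.

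There is no real obstacle here. The only point needing care is that the starter produced by Theorem~\ref{thm:three-asymptotic} really is the fixed-port construction with exactly the parameters \eqref{eq:fixed-ABC}, so that the hypotheses of Proposition~\ref{prop:three-chain} apply verbatim. This holds because Theorem~\ref{thm:three-asymptotic} is stated for precisely that construction. The substantive input, namely existence of the terrace through Corollary~\ref{cor:prescribed-path}, is already packaged into Theorem~\ref{thm:three-asymptotic}.
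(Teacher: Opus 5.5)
Your proposal is correct and matches the paper's proof, which also combines Theorem~\ref{thm:three-asymptotic} with Proposition~\ref{prop:three-chain} and enlarges \(r_0\) if necessary (in effect so that \(r_0\geq 11\)). Your check that step \(2\) avoids the traded steps \(\{r-3,1,7,r\}\) is already the content of Proposition~\ref{prop:three-chain}, so nothing further is needed.
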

\begin{proof}
This follows immediately from Theorem~\ref{thm:three-asymptotic} and
Proposition~\ref{prop:three-chain}, after increasing \(r_0\) if necessary.
\end{proof}

\begin{corollary}\label{cor:three-layer-path-number}
After increasing the non-explicit constant \(r_0\) if necessary,
\[
  \pn(\Cthree{r})=r+1
\]
for every \(r\equiv5\pmod6\) with \(r\geq r_0\).
\end{corollary}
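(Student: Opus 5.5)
The plan follows the proof of Theorem~\ref{thm:four-path}, with the four-layer inputs replaced by their three-layer counterparts. First, take \(r_0\) as in Theorem~\ref{thm:three-layer-chain-compatible}, enlarging it if necessary so that \(r_0\geq11\). Then for every \(r\equiv5\pmod6\) with \(r\geq r_0\), Theorem~\ref{thm:three-asymptotic} produces a \(3\)-layer balanced Hamilton starter \(H\) in \(\Cthree{r}\) via the fixed-port construction. By Lemma~\ref{lem:development}, its translates \(H_k=H+3k\), \(k\in\Z_r\), are pairwise arc-disjoint Hamilton cycles whose union is \(A(\Cthree{r})\).

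Second, use Proposition~\ref{prop:three-chain}. The path \(0\to2\to\cdots\to2r\) is a compatible deletion chain, so each \(H_k\) contains exactly one of its arcs, say \(e_k\). Its vertices \(0,2,\ldots,2r\) are distinct in \(\Z_{3r}\) because \(2r<3r\), so it is a simple path.

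Third, delete \(e_k\) from \(H_k\). Each \(H_k-e_k\) is a Hamilton path with \(3r-1\) arcs. The \(r\) arcs removed form the prescribed path, so we have \(r\) Hamilton paths plus this one path. These \(r+1\) paths are pairwise arc-disjoint and together cover every arc, which gives \(\pn(\Cthree{r})\leq r+1\).

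Fourth, for the lower bound, note that \(\Cthree{r}\) has \(3r^2\) arcs and any simple directed path has at most \(3r-1\) arcs. Hence
\[
\pn(\Cthree{r})\geq\left\lceil\frac{3r^2}{3r-1}\right\rceil=\left\lceil r+\frac{r}{3r-1}\right\rceil=r+1,
\]
since \(0<r/(3r-1)<1\).

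Combining the two bounds gives the equality. There is no real obstacle here, since all the substantive work was done earlier. The only point needing care is to fix \(r_0\) large enough that the hypotheses of Theorem~\ref{thm:three-asymptotic} and Proposition~\ref{prop:three-chain} (in particular \(r\geq11\)) hold at the same time.
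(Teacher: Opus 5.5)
Your proposal is correct and matches the paper's proof: you develop the chain-compatible fixed-port starter via Lemma~\ref{lem:development} and delete the step-$2$ chain arcs. This gives $r$ Hamilton paths plus one further path, and the counting bound $\lceil 3r^2/(3r-1)\rceil=r+1$ supplies the matching lower bound.
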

\begin{proof}
By Theorem~\ref{thm:three-layer-chain-compatible}, choose a chain-compatible
$3$-layer balanced Hamilton starter.  By Lemma~\ref{lem:development}, its $r$
translates form a Hamilton decomposition.  Deleting one chain arc from each
developed Hamilton cycle gives $r$ Hamilton paths, while the deleted
arcs form one further simple path.  Hence \(\pn(\Cthree{r})\leq r+1\).
Since \(\Cthree{r}\) has \(3r^2\) arcs and every simple path has at most
\(3r-1\) arcs,
\[
  \pn(\Cthree{r})
  \geq
  \left\lceil\frac{3r^2}{3r-1}\right\rceil
  =r+1.
\]
Thus \(\pn(\Cthree{r})=r+1\).
\end{proof}

The numerical equality in Corollary~\ref{cor:three-layer-path-number} also follows asymptotically from the general robust-expander theorem discussed in Appendix~\ref{app:robust-expansion}; the content here is the specified cyclic Hamilton decomposition together with its compatible deletion chain.

\section{Prescribed-path extension for directed rotational terraces}
\label{sec:prescribed}

We now derive the prescribed-path extension needed in the three-layer
construction from a rainbow Hamilton-path theorem of M\"uyesser and
Pokrovskiy. We use the colour convention already fixed for $\Gamma_r$: an
arc $u\to v$ has colour $v-u\in\Z_r$. For a finite set
$C\subseteq\Z_r$, write
\[
  \sum C:=\sum_{c\in C}c\in\Z_r
\]
for its group sum.

In the division digraph notation of M\"uyesser and Pokrovskiy, the colour of
$u\to v$ is $u^{-1}v$; for the additive group $\Z_r$ this is exactly $v-u$.
Thus \cite[Theorem~6.9]{MuyesserPokrovskiy2025} takes the following form in
our notation.

\begin{externalresult}[M\"uyesser--Pokrovskiy, Theorem~6.9]
For all sufficiently large $r$, let $V,C\subseteq\Z_r$ and let
$x,y\in\Z_r$ satisfy
\begin{equation}\label{eq:mp-size}
  |V|+1=|C|\geq r-r^{1/2},
\end{equation}
\begin{equation}\label{eq:mp-endpoints}
  x\neq y,
  \qquad
  x,y\notin V,
\end{equation}
and
\begin{equation}\label{eq:mp-sum}
  \sum C=y-x.
\end{equation}
Then the subdigraph on the vertex set $\{x,y\}\cup V$ consisting of the
arcs $u\to v$ whose colour $v-u$ lies in $C$ contains a directed rainbow
Hamilton path from $x$ to $y$.
\end{externalresult}

The exceptional clause in the original theorem for elementary abelian
$2$-groups does not arise in our application, since $\Z_r$ has odd order.
The following consequence is the form needed in the three-layer construction.
We obtain it by applying the external result to the vertices and colours left
unused by a prescribed path in $\Gamma_r$.

\begin{corollary}
\label{cor:prescribed-path}
Fix $\ell\geq1$.  For every sufficiently large odd $r$, every rainbow
directed path of length $\ell$ in $\Gamma_r$ is contained, with its
orientation and order unchanged, in a rainbow Hamilton cycle using every
nonzero vertex and every nonzero colour.  Equivalently, the cyclic ordering
of this Hamilton cycle is a directed rotational terrace of $\Z_r$ in which
the vertex sequence of the prescribed path occurs consecutively.
\end{corollary}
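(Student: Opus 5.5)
Let the prescribed rainbow path be \(v_0\to v_1\to\cdots\to v_\ell\) in \(\Gamma_r\), with vertices \(v_0,\ldots,v_\ell\in\Z_r\setminus\{0\}\) pairwise distinct and used colours \(c_i=v_i-v_{i-1}\) (\(1\le i\le\ell\)) pairwise distinct and nonzero. The plan is to close this path into a Hamilton cycle of \(\Gamma_r\) by one rainbow path from \(x:=v_\ell\) back to \(y:=v_0\). That return path should run through every remaining nonzero vertex and use every remaining nonzero colour. The External result will supply it. Accordingly, set
\[
  V:=\Z_r\setminus\{0,v_0,\ldots,v_\ell\},\qquad
  C:=\Z_r\setminus\{0,c_1,\ldots,c_\ell\}.
\]
Then \(|V|=r-\ell-2\) and \(|C|=r-\ell-1\), so \(|V|+1=|C|\). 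For \(r\) large in terms of the fixed \(\ell\), we also have \(|C|\ge r-r^{1/2}\), so \eqref{eq:mp-size} holds. For \eqref{eq:mp-endpoints}, \(x\ne y\) when \(\ell\ge1\) because the path is simple, and \(x,y\notin V\) by construction.

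The main check is the sum condition \eqref{eq:mp-sum}. Since \(r\) is odd, \(\sum_{g\in\Z_r}g=r(r-1)/2\equiv0\), so
\[
  \sum C=-\sum_{i=1}^{\ell}c_i=-(v_\ell-v_0)=v_0-v_\ell=y-x.
\]
The path's colours telescope, and the condition holds automatically. The exceptional case for elementary abelian \(2\)-groups does not arise because \(r\) is odd.

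The External result now gives a rainbow Hamilton path \(R\) from \(v_\ell\) to \(v_0\) on the vertex set \(\{v_\ell,v_0\}\cup V\). Its \(|V|+1=|C|\) arcs use colours from \(C\) and are pairwise distinct, so \(R\) uses every colour of \(C\) exactly once.

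Concatenating the prescribed path with \(R\) gives a closed walk. Its vertex set is \(\{v_0,\ldots,v_\ell\}\cup V=\Z_r\setminus\{0\}\), and the only shared vertices are the two endpoints, so it is a Hamilton cycle of \(\Gamma_r\). Its colours are \(\{c_1,\ldots,c_\ell\}\dot\cup C=\Z_r\setminus\{0\}\), so it uses each nonzero colour once. The prescribed path appears in it consecutively, with its orientation and order unchanged. Proposition~\ref{prop:rainbow} then converts this cycle into a directed rotational terrace whose cyclic order contains the vertex sequence of the path consecutively, which is the stated equivalent form.

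The only step where care is needed is \eqref{eq:mp-sum}. It relies on the group sum of \(\Z_r\) vanishing for odd \(r\), and on the telescoping of the prescribed colours. The threshold on \(r\) is the maximum of the External result's threshold and the bound needed for \(\ell+1\le r^{1/2}\).
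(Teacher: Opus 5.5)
Your proposal is correct and follows the paper's proof essentially step for step. You take the same residual vertex and colour sets, perform the same size and endpoint checks, verify the sum condition by the same telescoping argument (using that the group sum of \(\Z_r\) vanishes for odd \(r\)), and apply the External result with \(x=v_\ell\), \(y=v_0\) before concatenating and invoking Proposition~\ref{prop:rainbow}.
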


\begin{proof}
Let
\[
  P=(v_0,v_1,\ldots,v_\ell)
\]
be a prescribed rainbow directed path in $\Gamma_r$, and put
\[
  a:=v_0,
  \qquad
  b:=v_\ell.
\]
Since $P$ is a simple directed path of positive length, $a\neq b$.  Moreover,
all vertices of $P$ are nonzero because $V(\Gamma_r)=\Z_r\setminus\{0\}$.
Write
\[
  V(P):=\{v_0,v_1,\ldots,v_\ell\}
\]
for its vertex set.  Let
\[
  C(P):=\{v_i-v_{i-1}:1\leq i\leq\ell\}
\]
be the arc-colour set of $P$.  The path is rainbow, so
\begin{equation}\label{eq:prescribed-colour-size}
  |C(P)|=\ell.
\end{equation}

We use the vertices and colours not already used by $P$ as the residual
sets
\begin{equation}\label{eq:prescribed-residual-sets}
  V:=(\Z_r\setminus\{0\})\setminus V(P),
  \qquad
  C:=(\Z_r\setminus\{0\})\setminus C(P).
\end{equation}
The path $P$ has $\ell+1$ vertices, while \eqref{eq:prescribed-colour-size}
shows that it uses $\ell$ colours.  Hence
\begin{equation}\label{eq:prescribed-residual-size}
  |V|=r-\ell-2,
  \qquad
  |C|=r-\ell-1=|V|+1.
\end{equation}
Since $\ell$ is fixed, for all sufficiently large $r$ we also have
\[
  |C|=r-\ell-1\geq r-r^{1/2}.
\]
Thus the size condition \eqref{eq:mp-size} is satisfied.  By construction,
$a,b\notin V$, and $a\neq b$, so the endpoint conditions
\eqref{eq:mp-endpoints} are satisfied as well.

It remains to verify the group-sum condition.  The colours along $P$
telescope:
\begin{equation}\label{eq:prescribed-telescope}
  \sum C(P)
  =\sum_{i=1}^{\ell}(v_i-v_{i-1})
  =b-a.
\end{equation}
Because $r$ is odd,
\begin{equation}\label{eq:nonzero-group-sum}
  \sum_{z\in\Z_r\setminus\{0\}}z
  =\frac{r(r-1)}2
  =0
  \qquad\text{in }\Z_r.
\end{equation}
Taking the complement of the colour set in
\eqref{eq:prescribed-residual-sets} and using
\eqref{eq:prescribed-telescope} therefore gives
\begin{equation}\label{eq:prescribed-residual-sum}
  \sum C
  =-\sum C(P)
  =a-b.
\end{equation}

We now apply the external result with
\[
  x:=b,
  \qquad
  y:=a,
\]
and with the residual sets $V$ and $C$ from
\eqref{eq:prescribed-residual-sets}.  Equation
\eqref{eq:prescribed-residual-sum} is exactly the required condition
\[
  \sum C=y-x.
\]
Hence there is a directed rainbow Hamilton path
\[
  Q:b\longrightarrow a
\]
on the vertex set $\{a,b\}\cup V$ using only colours from $C$.  By
\eqref{eq:prescribed-residual-size}, $Q$ has $|V|+1=|C|$ arcs.  Since these
arc colours are pairwise distinct and all lie in $C$, the arc-colour set of
$Q$ is exactly $C$.

The internal vertices of $Q$ are precisely the nonzero vertices not used by
$P$, so $P$ and $Q$ meet only at their endpoints $a$ and $b$.  Concatenating
\[
  P:a\longrightarrow b
  \qquad\text{and}\qquad
  Q:b\longrightarrow a
\]
therefore produces a directed Hamilton cycle on
$\Z_r\setminus\{0\}$.  Its colour set is the disjoint union
\[
  C(P)\,\dot\cup\,C
  =\Z_r\setminus\{0\},
\]
so the cycle is rainbow and uses every nonzero colour exactly once.  The
path $P$ appears in this cycle with exactly its prescribed orientation and
vertex order.  Finally, Proposition~\ref{prop:rainbow} identifies the cyclic
ordering of this rainbow Hamilton cycle with a directed rotational terrace
of $\Z_r$, completing the proof.
\end{proof}

\begin{remark}
The threshold in Corollary~\ref{cor:prescribed-path} is non-explicit because
the sufficiently large hypothesis is inherited from
\cite[Theorem~6.9]{MuyesserPokrovskiy2025}.  
\end{remark}

\section{Conclusion and open problems}\label{sec:conclusion}

We constructed two infinite families of $q$-layer balanced Hamilton starters, each with the arithmetic step-$2$ path as a compatible deletion chain. The four-layer construction is explicit for $r\equiv1\pmod4$, $r\geq9$, while the three-layer construction is asymptotic for $r\equiv5\pmod6$.  As a consequence, deleting the chain arcs gives $\pn(\Cfour{r})=r+1$ and $\pn(\Cthree{r})=r+1$ in the respective ranges.  In both families every developed Hamilton cycle has winding number $(r+1)/2$.  The prescribed-path extension used for $q=3$ follows directly from \cite[Theorem~6.9]{MuyesserPokrovskiy2025}.

The present four-layer $ABAB$ construction does not extend directly to $r\equiv3\pmod4$. Indeed, set $S=r(r+1)/2$. When $r\equiv1\pmod4$, one has $S\equiv r\pmod{2r}$ and $2S\equiv2r\pmod{4r}$; these congruences are used in both the complementary-prefix lift and the four-residue balance arguments. When $r\equiv3\pmod4$, they become $S\equiv0\pmod{2r}$ and $2S\equiv0\pmod{4r}$, so the present Hamiltonicity and balance mechanisms fail. This is a limitation of the $ABAB$ construction rather than an obstruction to the existence of a different four-layer starter. Two natural questions remain.
\begin{enumerate}
\item Determine whether a different four-layer construction exists for $r\equiv3\pmod4$, or whether a genuine structural obstruction occurs in this congruence class.
\item Construct directly a directed rotational terrace containing \eqref{eq:fixed-path}, ideally for every $r\equiv5\pmod6$ above a small explicit threshold.
\end{enumerate}

\appendix
\numberwithin{equation}{section}
\section{Robust expansion and the general path-number theorem}\label{app:robust-expansion}

This appendix records why, for fixed $q\in\{3,4\}$ and all sufficiently
large $r$, the ordinary equality $\pn(\Cqr{q}{r})=r+1$ also follows from
the general robust-expander theory.  It is independent of the balanced
starter and compatible-chain constructions in the main text.

For a digraph $D$ on $n$ vertices and $S\subseteq V(D)$, the
\emph{$\nu$-robust outneighbourhood} of $S$ is
\[
  RN^+_{\nu,D}(S):=
  \{v\in V(D):|N_D^-(v)\cap S|\geq \nu n\}.
\]
The digraph $D$ is a \emph{robust $(\nu,\tau)$-outexpander} if
\[
  |RN^+_{\nu,D}(S)|\geq |S|+\nu n
\]
whenever $\tau n\leq |S|\leq(1-\tau)n$.

We use the following consequence of Gir\~ao--Granet--K\"uhn--Lo--Osthus
\cite[Theorem~5.2]{GiraoGranetKuhnLoOsthus2023}.

\begin{theorem}[Gir\~ao--Granet--K\"uhn--Lo--Osthus]\label{thm:general-robust-path}
Let
\[
  0<\frac1n\ll \nu\ll\tau\leq\frac{\delta}{2}\leq1
  \qquad\text{and}\qquad
  r\geq\delta n.
\]
If $D$ is an $r$-regular digraph on $n$ vertices and $D$ is a robust
$(\nu,\tau)$-outexpander, then
\[
  \pn(D)=r+1.
\]
\end{theorem}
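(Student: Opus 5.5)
This theorem is quoted from Gir\~ao, Granet, K\"uhn, Lo and Osthus \cite[Theorem~5.2]{GiraoGranetKuhnLoOsthus2023}, so the plan is to reduce it to that source rather than to reprove it. The lower bound is elementary and self-contained. An $r$-regular digraph on $n$ vertices has $rn$ arcs, and a simple path has at most $n-1$ arcs. Hence $\pn(D)\geq\lceil rn/(n-1)\rceil$, which equals $r+1$ whenever $r\leq n-2$. Regularity together with robust expansion for small $\nu$ excludes the degenerate near-complete cases, or they can be handled directly.

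For the upper bound, I would proceed in the two stages used in the source. First, by K\"uhn--Osthus \cite{KuhnOsthus2013}, a sufficiently large $r$-regular robust $(\nu,\tau)$-outexpander with $r\geq\delta n$ has a Hamilton decomposition into cycles $H_1,\ldots,H_r$. Second, a Hamilton decomposition alone gives $r$ cycles, not $r+1$ paths. We must delete one arc from each $H_k$ so that the $r$ deleted arcs form one simple path. This is exactly the chain-compatibility condition of Definition~\ref{def:chain-compatible}, now for an arbitrary rather than developed decomposition.

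The main obstacle is this second step: finding a simple path of length $r$ that is rainbow with respect to the decomposition colouring. In the source this is not done after the fact. Instead, the robust decomposition method is run with a prescribed linear forest absorbed into the decomposition, so that the final cycles are each forced to contain one arc of a pre-chosen path. My first attempt would be to fix a short path $P$ of length $r$ at the outset, chosen using the expansion so that its removal leaves a robust outexpander that is nearly regular. I would then apply the version of the Hamilton-decomposition theorem that decomposes the digraph into Hamilton cycles each extending one prescribed arc of $P$. Deleting the arc of $P$ from each cycle then gives the $r$ Hamilton paths plus $P$.

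For the present paper, no new argument is needed beyond checking the hypotheses. The sliding-window expansion of $\Cqr{q}{r}$ gives the robust outexpansion with $n=qr$ and $\delta=1/q$, and Theorem~\ref{thm:general-robust-path} is then invoked as stated.
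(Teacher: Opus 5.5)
Your plan matches the paper, which gives no proof of its own and simply imports this result from Gir\~ao et al.\ \cite[Theorem~5.2]{GiraoGranetKuhnLoOsthus2023}; deferring the upper bound to that source is exactly what the paper does. Two small remarks: the counting bound $\lceil rn/(n-1)\rceil=r+1$ holds for every $1\le r\le n-1$, so there are no degenerate cases to exclude (and the complete digraph is itself a robust outexpander). Also, your sketch of the upper bound, a Hamilton decomposition in which each cycle contains one prescribed arc of a path of length $r$, is not an off-the-shelf consequence of \cite{KuhnOsthus2013}, so read it as motivation rather than as part of the argument.
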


Here $\delta$ is the fixed density parameter in the linear-degree condition
$r\geq\delta n$.  We now verify the robust-expansion hypothesis for the
circulants used in this paper with parameters chosen to fit
Theorem~\ref{thm:general-robust-path} directly.

\begin{proposition}\label{prop:circulant-robust-expansion}
Fix $q\in\{3,4\}$, put $c=1/q$, and set
\[
  \tau:=\frac{c}{4}=\frac1{4q}.
\]
Then $\nu>0$ can be chosen sufficiently small relative to $\tau$ so that,
for all sufficiently large $r$, $\Cqr{q}{r}$ is a robust
$(\nu,\tau)$-outexpander.
\end{proposition}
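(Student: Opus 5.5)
The plan is a sliding-window argument on the cycle \(\Z_n\), \(n=qr\). In \(\Cqr{q}{r}\) the in-neighbourhood of \(v\) is the window \(\{v-r,\ldots,v-1\}\) of \(r=cn\) consecutive vertices. So \(v\in RN^+_{\nu}(S)\) exactly when this window contains at least \(\nu n\) points of \(S\). Fix \(S\) with \(\tau n\le|S|\le(1-\tau)n\), where \(\tau=c/4\). We must show that at least \(|S|+\nu n\) windows of length \(r\) are \(\nu n\)-rich. Equivalently, at most \(n-|S|-\nu n\) windows are \emph{poor}, meaning they meet \(S\) in fewer than \(\nu n\) points.

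\textbf{Step 1: averaging.} Each vertex of \(S\) lies in exactly \(r\) windows, so
\[
\sum_v |W_v\cap S| = r|S|.
\]
This alone is too weak: it only gives roughly \(|S|/\) (a constant) rich windows, not \(|S|+\nu n\). So averaging has to be combined with a structural decomposition.

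\textbf{Step 2: decompose \(S\) into runs.} Call \(x\) \emph{dense} if the window ending at \(x\) is rich. Consider the set \(R\) of rich windows, indexed by their right endpoint plus one. I will try to show directly that \(|R|\ge|S|+\nu n\). The key observation is as follows. If \(x\in S\) and the next at least \(\nu n\) elements of \(S\) following \(x\) all lie within distance \(r\), then the \(r\) consecutive vertices \(x+1,\ldots,x+r\) contain many points of \(S\).

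To exploit this, partition \(\Z_n\) into \(q\) consecutive blocks \(I_1,\ldots,I_q\) of length \(r\).

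\begin{itemize}
\item If a block \(I_k\) contains at least \(\nu n\) points of \(S\), then every vertex of the next block \(I_{k+1}\) is a head of a window containing \(I_k\) partially. More precisely, a vertex \(v\in I_{k+1}\) at offset \(t\) sees the last \(r-t\) points of \(I_k\) and the first \(t\) points of \(I_{k+1}\).
\item Slide a window continuously and count \(S\)-points. The count changes by at most \(1\) per step, so the set of poor positions is a union of intervals. Inside each poor interval, \(S\) is sparse: a window sliding across an interval of length \(\lambda\) sees at most \(\nu n(1+\lambda/r)\) points of \(S\).
\end{itemize}

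\textbf{Step 3: compare rich windows with \(|S|\).} Let \(Poor\) be the set of poor positions. Cover it by the windows \(W_v\), \(v\in Poor\). Their union is an interval-union of size at most \(|Poor|+\) (number of poor intervals)\(\cdot r\), and it contains few points of \(S\), at most \(\nu n\) per \(r\) length. This gives
\[
|S|\le n-|Poor|+O(\nu n\cdot q)+(\text{boundary loss}).
\]

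The main obstacle is the boundary loss. Each poor interval \(J\) forces an interval \(J\) together with a preceding stretch of length \(r\) to be nearly free of \(S\). That free stretch has length \(|J|+r-\nu n\), which exceeds \(|J|\) by almost \(r\). So the count improves rather than worsens: the number of \(S\)-free-ish vertices is at least \(|Poor|+r\) per nonempty poor interval.

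\textbf{Conclusion.} Suppose \(Poor\ne\emptyset\) (otherwise every window is rich and the bound is trivial from \(|S|\le(1-\tau)n\)). Then
\[
n-|S|\ge|Poor|+r-O(q\nu n),
\]
so the number of rich windows is
\[
|R|=n-|Poor|\ge|S|+r-O(q\nu n)\ge|S|+\nu n
\]
for \(\nu\ll c\). The lower bound \(|S|\ge\tau n\) is needed only to exclude the degenerate case where all windows are poor. If all windows are poor, then \(|S|\le q\nu n<\tau n\) by covering \(\Z_n\) with \(q\) windows.

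The delicate point is to verify carefully that distinct poor intervals have disjoint \(S\)-sparse extensions, so their extensions can be summed. If two extensions overlap, I would merge the intervals, since the gap between them is then shorter than \(r\).
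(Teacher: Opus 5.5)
Your argument is correct in substance, but it takes a genuinely different route from the paper.

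\textbf{How the two proofs differ.} The paper never forms the union of the poor windows. It studies the deficit $g(v)=r-f_S(v)$, which is $1$-Lipschitz and is at least $h=r-\lceil\nu n\rceil+1$ off $R$. From this it derives the ramp bound $\sum_v g(v)\ge(n-|R|)h+\frac{h-1}{2}\min\{|R|,h\}$. It then contradicts the exact double count $\sum_v g(v)=r(n-|S|)$ through the limiting profile $\Phi_\nu$, using $\alpha\ge\tau$ to get the uniform margin $\frac c2\min\{\alpha,c\}\ge c^2/8$, and a continuity step that leaves $\nu$ implicit. You instead bound $n-|S|$ from below by the vertices of $U=\bigcup_{v\in\mathrm{Poor}}W_v$ that avoid $S$.

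\textbf{What each buys.} Your route gives the stronger conclusion $|R|\ge|S|+r-1-O(q\nu n)$, which is essentially sharp (take $S$ an interval). It also gives an explicit $\nu$ of order $q^{-2}$, and needs $\tau$ only to rule out $U=\Z_n$. The paper's route needs no covering combinatorics at all.

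\textbf{What needs tightening.} As written, three points must be fixed.
First, Step 3 states an upper bound on $|U|$, but what you actually use is the lower bound $|U|\ge\min\{n,|\mathrm{Poor}|+r-1\}$. This follows at once from $U=\bigcup_{k=1}^{r}(\mathrm{Poor}-k)$ and the fact that $|X\cup(X+1)|\ge\min\{n,|X|+1\}$ for nonempty $X\subseteq\Z_n$, so no merging is needed.
Second, the bound $|S\cap U|=O(q\nu n)$ must not depend on the number of poor runs; summing your per-run estimate $\nu n(1+\lambda/r)$ over many short runs does not give it. Instead, pick poor vertices greedily, with $v_{i+1}$ the first poor vertex at or after $v_i+r$, and let $w_i$ be the last poor vertex in $[v_i,v_i+r-1]$. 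Then $U\subseteq\bigcup_i(W_{v_i}\cup W_{w_i})$ with at most $q$ indices, so $|S\cap U|<2q\nu n$.
Third, the degenerate case is $U=\Z_n$, not only ``all windows poor''. There $|S|<2q\nu n<\tau n$ once $\nu<1/(8q^2)$. Otherwise, for large $n$, $|R|=n-|\mathrm{Poor}|\ge|S|+r-1-2q\nu n\ge|S|+\nu n$.
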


\begin{proof}
Put $n=qr$, so $r=cn$, and let $S\subseteq\Z_n$ satisfy
\[
  |S|=\alpha n,\qquad \tau\leq\alpha\leq1-\tau.
\]
For $v\in\Z_n$, define
\[
  f_S(v):=\bigl|S\cap\{v-r,\ldots,v-1\}\bigr|.
\]
Thus $f_S(v)=|N^-_{\Cqr{q}{r}}(v)\cap S|$.  Consecutive windows differ in
one deleted and one added vertex, so
$|f_S(v+1)-f_S(v)|\leq1$, while double counting gives
$\sum_v f_S(v)=r|S|$.

Let
\[
  R:=RN^+_{\nu,\Cqr{q}{r}}(S)=\{v:f_S(v)\geq\nu n\},
\]
and suppose for a contradiction that
\begin{equation}\label{eq:app-R-small}
  |R|<|S|+\nu n=(\alpha+\nu)n.
\end{equation}
We choose $\nu<\tau$.  Then $|R|<(1-\tau+\nu)n<n$, so $R\neq\Z_n$.
Set
\[
  t:=\lceil\nu n\rceil,\qquad h:=r-t+1,
  \qquad g(v):=r-f_S(v).
\]
Since $\nu<\tau=c/4$, we have $\nu n<r/4<r$, so $t\leq r$ and hence
$h\geq1$.  The function $g$ is $1$-Lipschitz, and every $v\notin R$ satisfies
$g(v)\geq h$.  Moreover,
\begin{equation}\label{eq:app-g-exact}
  \sum_{v\in\Z_n}g(v)=r(n-|S|).
\end{equation}

Consider a cyclic interval component $C=\{v_1,\ldots,v_\ell\}$ of $R$,
with predecessor $v_0\notin R$.  Since $g(v_0)\geq h$ and $g$ is
$1$-Lipschitz,
\[
  g(v_j)\geq\max\{h-j,0\}\qquad(1\leq j\leq\ell).
\]
A direct summation yields
\[
  \sum_{v\in C}g(v)
  \geq \sum_{j=1}^{\ell}\max\{h-j,0\}
  \geq \frac{h-1}{2}\min\{\ell,h\}.
\]
If the components of $R$ have lengths $\ell_1,\ldots,\ell_k$, then
$\sum_i\min\{\ell_i,h\}\geq\min\{|R|,h\}$.  Adding these component
estimates to the bound $g(v)\geq h$ for $v\notin R$ gives
\begin{equation}\label{eq:app-g-lower}
  \sum_{v\in\Z_n}g(v)
  \geq (n-|R|)h+\frac{h-1}{2}\min\{|R|,h\}.
\end{equation}

For $0\leq x\leq n$, define
\[
  F_h(x):=(n-x)h+\frac{h-1}{2}\min\{x,h\}.
\]
Its two slopes are $-(h+1)/2$ and $-h$, so $F_h$ is strictly decreasing.
By \eqref{eq:app-R-small} and \eqref{eq:app-g-lower},
\begin{equation}\label{eq:app-F-lower}
  \sum_{v\in\Z_n}g(v)>F_h((\alpha+\nu)n).
\end{equation}

It remains to compare the right-hand side with the exact value in
\eqref{eq:app-g-exact}.  Put
\[
  A_n:=\frac hn,\qquad B_n:=\frac{h-1}{n},
  \qquad \lambda:=c-\nu.
\]
Since $h=cn-\lceil\nu n\rceil+1$,
\[
  |A_n-\lambda|\leq\frac1n,
  \qquad |B_n-\lambda|\leq\frac1n.
\]
Define
\[
  \Psi_{n,\nu}(\alpha)
  :=\frac1{n^2}F_h((\alpha+\nu)n)
  =A_n(1-\alpha-\nu)
   +\frac{B_n}{2}\min\{\alpha+\nu,A_n\},
\]
and
\[
  \Phi_\nu(\alpha)
  :=\lambda(1-\alpha-\nu)
   +\frac{\lambda}{2}\min\{\alpha+\nu,\lambda\}.
\]
Because the map $(a,x)\mapsto\min\{a,x\}$ is $1$-Lipschitz in each
variable, the preceding bounds give, uniformly for $\alpha\in[\tau,1-\tau]$,
\begin{equation}\label{eq:app-uniform}
  \sup_{\alpha\in[\tau,1-\tau]}
  |\Psi_{n,\nu}(\alpha)-\Phi_\nu(\alpha)|
  \leq\frac2n\longrightarrow0.
\end{equation}

At $\nu=0$,
\[
  \Phi_0(\alpha)-c(1-\alpha)
  =\frac c2\min\{\alpha,c\}.
\]
Since $\tau=c/4$ and $\alpha\geq\tau$,
\[
  \Phi_0(\alpha)-c(1-\alpha)
  \geq\frac{c^2}{8}=:\eta>0
\]
uniformly on $[\tau,1-\tau]$.  By continuity of
$(\alpha,\nu)\mapsto\Phi_\nu(\alpha)$ on a compact neighbourhood of
$[\tau,1-\tau]\times\{0\}$, $\nu>0$ may be chosen sufficiently small
(relative to $\tau$) so that
\[
  \Phi_\nu(\alpha)>c(1-\alpha)+\frac\eta2
  \qquad\text{for all }\alpha\in[\tau,1-\tau].
\]
Fix such a $\nu$.  By the uniform convergence
\eqref{eq:app-uniform}, for all sufficiently large $r$,
\[
  \Psi_{n,\nu}(\alpha)>c(1-\alpha)
  \qquad\text{for all }\alpha\in[\tau,1-\tau].
\]
Thus
\[
  F_h((\alpha+\nu)n)>c(1-\alpha)n^2=r(n-|S|),
\]
which together with \eqref{eq:app-F-lower} contradicts
\eqref{eq:app-g-exact}.  Hence
$|R|\geq|S|+\nu n$, as required.
\end{proof}

\begin{corollary}\label{cor:general-path-number}
For each fixed $q\in\{3,4\}$, all sufficiently large $r$ satisfy
\[
  \pn(\Cqr{q}{r})=r+1.
\]
\end{corollary}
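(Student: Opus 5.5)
The corollary follows by combining Theorem~\ref{thm:general-robust-path} with Proposition~\ref{prop:circulant-robust-expansion}, after checking that the quantifier hierarchy matches. Fix $q\in\{3,4\}$ and set $n=qr$. The digraph $\Cqr{q}{r}$ is $r$-regular on $n$ vertices, since every vertex has out-steps and in-steps exactly $[r]$. Its degree satisfies $r=n/q$, so the linear-degree hypothesis $r\geq\delta n$ holds with $\delta:=1/q$.

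Take $\tau:=1/(4q)$ as in Proposition~\ref{prop:circulant-robust-expansion}. Then $\tau\leq\delta/2=1/(2q)\leq1$, as required. Next choose $\nu>0$ small enough for two purposes:
\begin{itemize}
\item so that Proposition~\ref{prop:circulant-robust-expansion} applies;
\item so that $\nu\ll\tau$ holds in the sense demanded by Theorem~\ref{thm:general-robust-path}.
\end{itemize}
Both are upper-bound conditions on $\nu$ in terms of the fixed $\tau$, so the minimum of the two thresholds works.

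With $\nu,\tau,\delta$ now fixed, the hierarchy $0<1/n\ll\nu$ only requires $n=qr$ to be sufficiently large, i.e.\ $r\geq r_1(q)$. Enlarging $r$ further if needed, Proposition~\ref{prop:circulant-robust-expansion} shows that $\Cqr{q}{r}$ is a robust $(\nu,\tau)$-outexpander. Theorem~\ref{thm:general-robust-path} then gives $\pn(\Cqr{q}{r})=r+1$. Only finitely many $q$ are involved, so the threshold is simply the maximum of the thresholds obtained.

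The one point needing care is the order of quantifiers. The ``$\ll$'' in Theorem~\ref{thm:general-robust-path} means $\nu\le\nu_0(\tau,\delta)$, and Proposition~\ref{prop:circulant-robust-expansion} also permits shrinking $\nu$, because its choice of $\nu$ was only required to be sufficiently small relative to $\tau$. So the choices are compatible, and there is no real obstacle. As a consistency check, the lower bound $\lceil qr^2/(qr-1)\rceil=r+1$ is independent of all this and agrees with the result.
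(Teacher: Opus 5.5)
Your proposal is correct and follows the paper's own proof: take $\delta=1/q$, so $r=\delta n$; use $\tau=1/(4q)\leq\delta/2$ from Proposition~\ref{prop:circulant-robust-expansion}; shrink $\nu$ so that both that proposition and $\nu\ll\tau$ hold; then take $r$ large and apply Theorem~\ref{thm:general-robust-path}. Shrinking $\nu$ is safe for an additional reason you could cite: being a robust $(\nu,\tau)$-outexpander implies being a robust $(\nu',\tau)$-outexpander for every $\nu'\leq\nu$, since $RN^+_{\nu'}(S)\supseteq RN^+_{\nu}(S)$ and $|S|+\nu' n\leq|S|+\nu n$.
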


\begin{proof}
Set $D:=\Cqr{q}{r}$ and $n:=qr$.  The digraph $D$ is $r$-regular: each
vertex has the $r$ outarcs of steps $1,\ldots,r$ and the corresponding $r$
inarcs.  In Theorem~\ref{thm:general-robust-path}, take
\[
  \delta:=\frac1q.
\]
Then $r=n/q=\delta n$.  Proposition~\ref{prop:circulant-robust-expansion}
uses $\tau=1/(4q)$, so
\[
  \tau=\frac1{4q}<\frac1{2q}=\frac\delta2.
\]
Choose $\nu$ in that proposition sufficiently small that $\nu\ll\tau$ as
well, and then take $r$ sufficiently large that $1/n\ll\nu$.  Hence
\[
  0<\frac1n\ll\nu\ll\tau\leq\frac\delta2\leq1,
  \qquad r\geq\delta n,
\]
and all hypotheses of Theorem~\ref{thm:general-robust-path} hold.  Therefore
$\pn(D)=r+1$.
\end{proof}

\section*{Acknowledgements}
This work was supported by the National Natural Science Foundation of China under Grant No.~12071351.


\begin{thebibliography}{99}


\bibitem{AlspachMasonPullman1976}
Brian R. Alspach, David W. Mason, and Norman J. Pullman.
\newblock Path numbers of tournaments.
\newblock \emph{Journal of Combinatorial Theory, Series B},
  20(3):222--228, 1976.
\newblock \url{https://doi.org/10.1016/0095-8956(76)90013-7}.

\bibitem{AlspachPullman1974}
Brian R. Alspach and Norman J. Pullman.
\newblock Path decompositions of digraphs.
\newblock \emph{Bulletin of the Australian Mathematical Society},
  10(3):421--427, 1974.
\newblock \url{https://doi.org/10.1017/S0004972700041101}.


\bibitem{DuHsu1989}
D. Z. Du and D. Frank Hsu.
\newblock On Hamiltonian consecutive-$d$ digraphs.
\newblock \emph{Banach Center Publications}, 25(1):47--55, 1989.
\newblock \url{https://doi.org/10.4064/-25-1-47-55}.

\bibitem{GiraoGranetKuhnLoOsthus2023}
Ant\'onio Gir\~ao, Bertille Granet, Daniela K\"uhn, Allan Lo, and Deryk Osthus.
\newblock Path decompositions of tournaments.
\newblock \emph{Proceedings of the London Mathematical Society},
  126(2):429--517, 2023.
\newblock \url{https://doi.org/10.1112/plms.12480}.

\bibitem{JordonMorris2009}
Heather Jordon and Joy Morris.
\newblock Directed cyclic Hamiltonian cycle systems of the complete symmetric digraph.
\newblock \emph{Discrete Mathematics}, 309(4):784--796, 2009.
\newblock \url{https://doi.org/10.1016/j.disc.2008.01.016}.

\bibitem{KadriSajna2025}
Suzan Kadri and Mateja \v{S}ajna.
\newblock The directed Oberwolfach problem with variable cycle lengths: A recursive construction.
\newblock \emph{Journal of Combinatorial Designs}, 33:239--260, 2025.
\newblock \url{https://doi.org/10.1002/jcd.21967}.

\bibitem{KuhnOsthus2013}
Daniela K\"uhn and Deryk Osthus.
\newblock Hamilton decompositions of regular expanders: A proof of Kelly's conjecture for large tournaments.
\newblock \emph{Advances in Mathematics}, 237:62--146, 2013.
\newblock \url{https://doi.org/10.1016/j.aim.2013.01.005}.

\bibitem{Liu2009}
Guizhen Liu.
\newblock Orthogonal factorizations of digraphs.
\newblock \emph{Frontiers of Mathematics in China}, 4(2):311--323, 2009.
\newblock \url{https://doi.org/10.1007/s11464-009-0011-y}.

\bibitem{LoPatelSkokanTalbot2020}
Allan Lo, Viresh Patel, Jozef Skokan, and John Talbot.
\newblock Decomposing tournaments into paths.
\newblock \emph{Proceedings of the London Mathematical Society},
  121(2):426--461, 2020.
\newblock \url{https://doi.org/10.1112/plms.12328}.

\bibitem{MuyesserPokrovskiy2025}
Alp M\"uyesser and Alexey Pokrovskiy.
\newblock A random Hall--Paige conjecture.
\newblock \emph{Inventiones Mathematicae}, 240:779--867, 2025.
\newblock \url{https://doi.org/10.1007/s00222-025-01328-x}.

\bibitem{Ollis2012}
Matthew A. Ollis.
\newblock A note on terraces for abelian groups.
\newblock \emph{Australasian Journal of Combinatorics}, 52:229--234, 2012.
\newblock \url{https://ajc.maths.uq.edu.au/pdf/52/ajc_v52_p229.pdf}.

\bibitem{PasottiPellegrini2018}
Anita Pasotti and Marco Antonio Pellegrini.
\newblock Cyclic uniform 2-factorizations of the complete multipartite graph.
\newblock \emph{Graphs and Combinatorics}, 34:901--930, 2018.
\newblock \url{https://doi.org/10.1007/s00373-018-1920-x}.

\bibitem{PatelYildiz2026}
Viresh Patel and Mehmet Akif Y\i ld\i z.
\newblock Path decompositions of oriented graphs.
\newblock \emph{European Journal of Combinatorics}, 134:104346, 2026.
\newblock \url{https://doi.org/10.1016/j.ejc.2026.104346}.

\end{thebibliography}
\end{document}